\theoremstyle{plain}
\newtheorem{theo}{Theorem}[section]
\newtheorem{lem}[theo]{Lemma}
\newtheorem{prop}[theo]{Proposition}
\theoremstyle{definition}
\newtheorem{defi}[theo]{Definition}
\newtheorem{nota}[theo]{Notation}
\theoremstyle{remark}
\newtheorem{rem}[theo]{Remark}
\newtheorem{exa}[theo]{Example}
\DeclareMathOperator{\de}{d}
\DeclareMathOperator{\SF}{\operatorname{II}}		
\newcommand*{\N}{\ensuremath{\mathbb{N}}\xspace}
\newcommand*{\R}{\ensuremath{\mathbb{R}}\xspace}
\newcommand*{\Fr}[2]{\ensuremath{\operatorname{Fr}_{\mathbf{#1}}(#2)}\xspace}
\newcommand*{\Ad}{\ensuremath{\operatorname{Ad}}\xspace}
\newcommand*{\Sp}{\ensuremath{\mathbf{Sp}}\xspace}
\newcommand*{\id}{\ensuremath{\operatorname{id}}\xspace}
\newcommand*{\Qu}{\ensuremath{\mathbb{H}}\xspace}
\newcommand*{\h}{\ensuremath{\mathfrak{h}}\xspace}
\newcommand*{\g}{\ensuremath{\mathfrak{g}}\xspace}
\newcommand*{\inv}[1]{\ensuremath{{#1}^{-1}}\xspace}
\newcommand*{\Gl}{\ensuremath{\mathbf{Gl}}\xspace}
\newcommand*{\im}{\ensuremath{\operatorname{im}}\xspace}
\newcommand*{\Rea}{\ensuremath{\operatorname{Re}}\xspace}
\newcommand*{\pr}{\ensuremath{\operatorname{pr}}\xspace}
\newcommand*{\SO}{\ensuremath{\mathbf{SO}}\xspace}
\newcommand*{\LieAlg}[2][empty]{
\ifthenelse{\equal{#1}{empty}}
{\ensuremath{\mathfrak{#2}}}
{\ensuremath{\mathfrak{#2}(#1)}}
}
\begin{document}


\title{Some Remarks on the Hyperkähler Reduction}
\author{Robin Raymond\thanks{This work has been funded by the RTG 1493. I'd like
        to thank my supervisor Victor Pidstrygach.}}

\pagenumbering{arabic}
\maketitle
\begin{abstract}
    We consider a \emph{hyperkähler reduction} and describe it via frame
    bundles. Tracing the connection through the various reductions, we recover
    the results of \cite{gocho1992}. In addition, we show that the fibers of
    such a reduction are necessarily totally geodesic. As an independent result,
    we describe O'Neill's submersion tensors \cite{oneill1966} on principal
    bundles.
\end{abstract}

\tableofcontents

\section {Introduction}

The Hyperkähler Reduction is a cousin of the Symplectic Reduction applicable
to the setting where the starting manifold $M$ is hyperkähler and the involved
data, the action of an auxiliary group $G$ and the moment map $\mu$, respect
this structure. It is well known, that this implies that the \emph{final
    manifold}, the quotient of a preimage of a central regular value of $\mu$ by $G$,
also is a hyperkähler manifold. This however is not all that is special about
the hyperkähler reduction.

In their paper \cite{gocho1992} T. Gocho and H. Nakajima find some interesting
relations between various geometrical quantities involved in this construction.
The paper uses various calculations in the tangent bundle to show these
relations.

We will present a different approach in this work by \emph{lifting} the
calculation onto the involved principal bundles. Although quite a bit longer
than the original work, it highlights the role the quaternionic structure plays
in the construction. The length can be partly attributed to the need to
introduce basic notions in this setting, e.g. the section \emph{\ref{rmsub}
Riemannian Submersions} which recovers the fundamentals of O'Neill's theory in
the principal bundle setting.

The aim of this paper is to show that these relations can be derived
fundamentally from the structure of quaternionic matrices, when embedded into
real matrices. It does so, by first deriving equation \eqref{realfinaleq}, which
does not need the involved quaternionic structures. Then this equation is
compared to the \emph{quaternionic world} \eqref{finalequ}, and this comparison
yields all the relations that we long for. It then just remains to decipher the
implied relations for the quaternionic components.

The section \emph{\ref{sec-defi} Definitions} recalls the basic notions involved
in hyperkähler geometry and in particular in a hyperkähler reduction. Of utmost
importance to the next sections are the notions of reduction and extension of
principal bundles. Further it describes a recipe to compare forms on the
manifolds and the involved principal bundles.

Section \emph{\ref{sec-setting} Setting} first discusses the tangent bundle of
$M$ and how its quaternionic structure behaves with respect to the reduction.
This structure allows for various reductions of the principal bundle of frames
of $M$. These bundles lie at the heart of the construction in this work.

The following section inspects the involved forms with respect to the bundles
discussed. Concretely we will trace the reductions of the Levi-Civita connection
and tautological form starting from the principal bundle of frames of $M$ all
the way to the principal bundle of frames of the quotient $N$. A quick excursion
is made in this section, explaining the fundamentals of Riemannian Submersions
in the principal bundle language.

The  last section \emph{\ref{sec-final} Final Results} uses the preceding work to recover the
results of Gocho and Nakajima, and show a small novelty. It is this section
where the relation between the quaternionic structure and the results is
investigated.

I'd like to thank my supervisor Victor Pidstrygach for the idea of this project
and the countless times he assisted me. I'd also like to thank Florian Beck for
proofreading a draft of this work.

\section{Definitions} \label{sec-defi}
Let us define some standard notions. Throughout this paper, let $M$ be a smooth
oriented Riemannian manifold of dimension $4m\in \N$, and $G$ a smooth Lie group
of dimension $k\in \N$.

\begin{nota}
    By $\Fr{SO}{M}$ we denote the \emph{principal bundle of orthonormal frames} on $M$,
    \begin{align*}
        \Fr{SO}{M} = \bigl\{ p\colon \R^{4m} \to T_{x}M : \text{$p$ is an oriented
            orthogonal isomorphism} \bigr\}.
    \end{align*}
\end{nota}

\begin{nota}
    By $\theta^M \in \Omega^1(\Fr{SO}{M}, \R^{4m})$ we denote the \emph{soldering form} of $\Fr{SO}{M}$
    \begin{align*}
        \theta^M_p(\xi) = p^{-1}\circ D\pi_p(\xi), \qquad p\in \Fr{SO}{M},\,\, \xi\in T_p\Fr{SO}{M},
    \end{align*}
    where $\pi\colon\Fr{SO}{M} \to M$ is the projection.
\end{nota}

Let $\varphi\in \Omega^1{(\Fr{SO}{M}, \LieAlg{so}(4m))}^{\SO(4m)}$ denote the
Levi-Civita
connection of $(M,g)$. Then $\varphi$ satisfies
\begin{itemize}
    \item $\text{R}_g^*\varphi = \Ad_{g^{-1}} \circ \varphi$, for all $g\in \SO(4m),$
    \item $\varphi(K^{\xi}) = \xi$ for all $\xi \in \mathfrak{so}(4m)$, where $K^{\xi}$ is the fundamental vector field
        to the lie algebra element $\xi$, i.e. $$K^{\xi}_p = \left.\frac{\de}{\de t}\right|_{t=0}(p\exp(t\xi)),$$
    \item $\text{d}\theta + \varphi \wedge \theta = 0$, i.e. $\varphi$ has zero torsion.
\end{itemize}

\begin{defi}[Hyperkähler Manifold]
    A Riemannian manifold $(M,g)$ with a triple of almost complex structures $I,J,K$,
    \begin{align*}
        I,J,K \colon TM \to TM, \qquad I^2=J^2=K^2 = -\id_{TM},
    \end{align*}
    which satisfy the quaternionic relation $IJ = K$ and are compatible with the metric,
    \begin{align*}
        g(-,-) = g(I-,I-) = g(J-, J-) = g(K-,K-),
    \end{align*}
    is called a hyperkähler manifold (hk-manifold) if the two-forms corresponding to $I,J$ and $K$
    are closed, i.e.
    \begin{align*}
        d\omega_A = 0, \qquad \omega_A(-,-) = g(A-,-), \qquad A\in\left\{I,J,K\right\}.
    \end{align*}
\end{defi}

\begin{prop}[Alternative Characterization]
    $(M^{4m},g)$ is a hyperkähler manifold if and only if the structure group
    of $\Fr{SO}{M}$ reduces to $\Sp(m)$ and the
    Levi-Civita connection on $\Fr{SO}{M}$ reduces to a connection on
    \begin{align*}
        \Fr{Sp}{M} = \bigl\{ p\colon \Qu^{m} \to T_{x}M : \text{$p$ is a \Qu-linear isomorphism} \bigr\},
    \end{align*}
    i.e.\ the horizontal subspaces are
    tangent to the submanifold $\Fr{Sp}{M} \subset \Fr{SO}{M}$.
\end{prop}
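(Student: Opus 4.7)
The plan is to prove the two implications separately. The paper's definition of ``connection reduces'' is already formulated as ``horizontal subspaces tangent to $\Fr{Sp}{M}$'', so I will freely use the standard equivalence of this with the statement that parallel transport preserves the subbundle.

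For the forward implication, assume $(M,g,I,J,K)$ is hyperkähler. I first construct the reduction by letting $\Fr{Sp}{M}$ be the set of $p\colon\R^{4m}\to T_xM$ which, under the identification $\R^{4m}\cong\Qu^m$, intertwine left multiplication by $i,j,k$ on $\Qu^m$ with $I,J,K$ on $T_xM$. Since $\Sp(m)\subset\SO(4m)$ is by definition the stabilizer of this standard quaternionic structure, the result is a principal $\Sp(m)$-subbundle of $\Fr{SO}{M}$; local nonemptiness is the pointwise fact that every quaternion-Hermitian vector space admits an orthonormal $\Qu$-basis. The nontrivial step is showing that the Levi-Civita horizontal distribution is tangent to $\Fr{Sp}{M}$; equivalently, that parallel transport preserves the subbundle, which translates to $\nabla I=\nabla J=\nabla K=0$. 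Here I would invoke Hitchin's lemma: for three $g$-compatible almost complex structures satisfying the quaternionic relations, closedness of all three Kähler forms forces integrability of each $A\in\{I,J,K\}$, so that each triple $(g,A,\omega_A)$ is Kähler by the classical criterion ($d\omega_A=0$ together with integrability implies $\nabla A=0$). This Nijenhuis-vanishing step is where I expect the main technical obstacle to lie.

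For the reverse implication, assume the reduction $\Fr{Sp}{M}$ exists and $\varphi$ restricts to it. I would define tensor fields $A\in\Gamma(\operatorname{End}(TM))$ for $A\in\{I,J,K\}$ by transferring left multiplication by $i,j,k$ on $\Qu^m$ through any frame $p\in\Fr{Sp}{M}$ over the basepoint: $A_x \mathrel{\mathop:}= p\circ(a\cdot)\circ p^{-1}$ for $a\in\{i,j,k\}$. Since $\Sp(m)$ is by construction the subgroup commuting with these multiplications, the definition is independent of the choice of frame and yields globally defined smooth tensor fields. The quaternionic relations $I^2=J^2=K^2=-\id$, $IJ=K$, and the four-fold metric compatibility follow at once from the corresponding pointwise statements on $\Qu^m$ together with $\Sp(m)\subset\SO(4m)$. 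Finally, since $\varphi$ reduces to $\Fr{Sp}{M}$, parallel transport preserves that subbundle, hence also commutes with each $A$, so $\nabla I=\nabla J=\nabla K=0$. The desired closedness $d\omega_A=0$ is then immediate from $\nabla A=0$ and $\nabla g=0$, completing the proof.
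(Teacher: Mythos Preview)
The paper does not prove this proposition; it is stated as a known alternative characterization, and the paragraph that follows it is only a remark spelling out the dual formulation via the connection form $\varphi$, not a proof. There is therefore nothing in the paper to compare your argument against.

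That said, your proposal is the standard argument and is correct. In the forward direction you rightly identify Hitchin's lemma (from the paper's reference \cite{hitchin1987}) as the one substantive step: closedness of all three $\omega_A$ together with the quaternion relations forces each $A\in\{I,J,K\}$ to be integrable, whence each $(g,A,\omega_A)$ is genuinely K\"ahler and $\nabla A=0$, so Levi-Civita parallel transport preserves the subbundle $\Fr{Sp}{M}$. The reverse direction is routine and your sketch is accurate: the $\Sp(m)$-reduction manufactures $I,J,K$ as associated tensor fields (well-defined because $\Sp(m)$ commutes with left multiplication by $i,j,k$ on $\Qu^m$), reducibility of the Levi-Civita connection makes them parallel, and $d\omega_A=0$ follows from $\nabla g=0$ and $\nabla A=0$.
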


Note that in the dual formulation the condition on the horizontal subspaces is that $\varphi$ reduces to a connection
on $\Fr{Sp}{M}$. Precisely this means that $\lambda_*j^*\varphi$ is a connection on $\Fr{Sp}{M}$,
where $j\colon \Fr{Sp}{M} \to \Fr{SO}{M}$ and $\lambda\colon \Sp(m) \to \SO(4m)$ are the inclusions
and $\lambda_*\colon \mathfrak{sp}(m)\to \mathfrak{so}(4m)$ is the derivative of $\lambda$.

\begin{defi}[Hyperkähler Action]
    We say a group $G$ acts hyperkähler on a hyperkähler manifold $(M,g,I,J,K)$, if $G$ acts on $M$ and
    this action preserves the metric $g$ and the hyperkähler structures $I,J$ and $K$, i.e.
    \begin{align}
        R_h^* \omega_A = \omega_A \quad \forall A\in\left\{I,J,K\right\}, \qquad R_h^* g = g,
    \end{align}
    for all $h\in G$. (In this case we used a right action of $G$ on $M$, but this definition does not require so).
\end{defi}

\begin{defi}[tri-hamiltonian action]
    A hyperkähler action of $G$ on $M$ is called a tri-hamiltonian action,
    if $G$-equivariant moment maps
    \begin{align}
        \mu_I,\mu_J, \mu_K\colon M\to \mathfrak{g}^*
    \end{align}
    exist, i.e.
    \begin{align}
        \mu_A(x.h) = \Ad_{h^{-1}}^*\circ \mu_A(x) \qquad \forall x\in M, \quad \forall h \in
        G, \quad \forall A\in\left\{I,J,K\right\},  \label{firsttri} \\
        \left<\xi, \text{d}\mu_A(\eta)\right> = \omega_A(K^{\xi}, \eta) \qquad \forall
        \eta\in TM,\quad \forall \xi \in \mathfrak{g},\quad \forall A\in\left\{I,J,K\right\}.
    \end{align}
\end{defi}
The moment maps of a tri-hamiltonian action are also often considered together as a map
$\mu=(\mu_I,\mu_J,\mu_K)\colon M \to \R^3\otimes \,\,\mathfrak{g}^*$.

\subsection{Reduction and Extensions}

Let $\pi\colon P \to M$ be a principal bundle with structure group $G$. A reduction of $P$ is a principal bundle $Q\to M$
with structure group $H$ and maps
\begin{align}
    \lambda \colon H\to G, \qquad f\colon Q\to P,
\end{align}
a Lie homomorphism and a smooth map respectively, such that the following diagram commutes.

\begin{center}
    \begin{tikzpicture}
        \matrix(m) [matrix of math nodes,row sep=3em,column sep=4em,minimum width=2em]
        {
            Q\times{H} &  & P\times{G}\\
            Q & & P \\
            & M & \\
        };
        \path[-stealth]
        (m-1-1) edge node [above] {$f\times \lambda $} (m-1-3)
        edge (m-2-1)
        (m-1-3) edge (m-2-3)
        (m-2-1) edge node [above] {$f$} (m-2-3)
        (m-2-1) edge (m-3-2)
        (m-2-3) edge (m-3-2);
    \end{tikzpicture}
\end{center}
The vertical maps above are the group actions on the principal bundles.
An extension of $P$ is a principal bundle $\tilde{Q}\to M$ of structure group $\tilde{H}$ with maps $\tilde{\lambda}\colon G\to \tilde{H}$ and
$\tilde{f}\colon P\to \tilde{Q}$, such that $P$ is a reduction of $\tilde{Q}$.

Given a connection $\phi^{P}$ on $P$, then there is a unique connection $\phi^{\tilde{Q}}$ on $\tilde{Q}$ such that
\begin{align}
    \tilde{f}^*\phi^{\tilde{Q}} = \tilde{\lambda}_*\circ \phi^{P},
\end{align}
where $\tilde{\lambda}_*$ is the derivative of $\tilde{\lambda}$ (see
e.g.~\cite[Satz 4.1]{baum}). In this sense, a connection is always extendable. If two connections
satisfy the equation above, we say that $\phi^P$ extends to $\phi^{\tilde{Q}}$ and $\phi^{\tilde{Q}}$ reduces to $\phi^P$.

On $Q$ the situation is somewhat more complicated. We will only discuss the situation for the simplest case where $f=i$ and $\lambda$ are the inclusions.
\begin{prop}[Reduction of a connection]\label{reduction}
    If $\mathfrak{g} = \mathfrak{h}\oplus \mathfrak{f}$ as $H$-representations, i.e.
    $\mathfrak{f} \subset \mathfrak{g}$
    is a vector space complement of $\mathfrak{h} \subset \mathfrak{g}$,
    with the property that
    \begin{align} \label{reductionCond}
        \Ad_H(\mathfrak{f}) \subset \mathfrak{f},
    \end{align}
    then $\pr_{\mathfrak{h}} \circ i^*\phi^P$ is a connection on $Q$, where the projection is with respect to the
    decomposition given above.
\end{prop}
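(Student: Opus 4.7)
The plan is to verify the two defining axioms of a connection form for the one-form $\psi := \pr_{\mathfrak{h}} \circ i^*\phi^P \in \Omega^1(Q, \mathfrak{h})$: namely, the reproduction of fundamental vector fields, and the $\Ad$-equivariance under the right $H$-action. Both will follow almost formally from the hypothesis \eqref{reductionCond}, together with the corresponding properties of $\phi^P$ already pulled back along $i$.

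First I would handle the fundamental vector field condition. For $\xi \in \mathfrak{h}$, the fundamental vector field $K^{\xi}$ on $Q$ is $i$-related to the fundamental vector field $K^{\lambda_*\xi} = K^{\xi}$ on $P$, since $\lambda$ is the inclusion; this is immediate from the defining formula $K^{\xi}_p = \left.\tfrac{\de}{\de t}\right|_{t=0}(p\exp(t\xi))$ and the commutativity of the diagram defining a reduction. Consequently $i^*\phi^P(K^{\xi}) = \phi^P(K^{\xi}) = \xi$, and as $\xi \in \mathfrak{h}$ we have $\pr_{\mathfrak{h}}(\xi) = \xi$, giving $\psi(K^{\xi}) = \xi$.

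Next I would verify equivariance. The $H$-action on $Q$ is the restriction of the $G$-action on $P$, i.e.\ $i \circ R_h = R_h \circ i$ for $h\in H$. Using the equivariance of $\phi^P$ I would compute
\begin{align*}
    R_h^*\, i^*\phi^P = i^*\, R_h^*\phi^P = i^*(\Ad_{h^{-1}} \circ \phi^P) = \Ad_{h^{-1}} \circ i^*\phi^P.
\end{align*}
The remaining, crucial, step is to move $\pr_{\mathfrak{h}}$ past $\Ad_{h^{-1}}$. Since $\mathfrak{h}$ is always $\Ad_H$-invariant, and $\mathfrak{f}$ is $\Ad_H$-invariant by assumption \eqref{reductionCond}, the decomposition $\mathfrak{g} = \mathfrak{h} \oplus \mathfrak{f}$ is $H$-equivariant, so $\pr_{\mathfrak{h}} \circ \Ad_{h^{-1}} = \Ad_{h^{-1}} \circ \pr_{\mathfrak{h}}$ on $\mathfrak{g}$. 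Combining, $R_h^*\psi = \Ad_{h^{-1}} \circ \psi$, as required.

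The argument is essentially bookkeeping; there is no real obstacle, but the one point deserving care is precisely where \eqref{reductionCond} enters: without $\Ad_H$-invariance of the complement $\mathfrak{f}$, the projector $\pr_{\mathfrak{h}}$ would fail to be $H$-equivariant and equivariance of $\psi$ would break down. Everything else is a straightforward pullback manipulation using that $i$ intertwines the two right actions.
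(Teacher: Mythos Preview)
Your proof is correct and takes essentially the same approach as the paper: the paper's proof simply observes that \eqref{reductionCond} together with $\Ad_H(\mathfrak{h})\subset\mathfrak{h}$ makes $\pr_{\mathfrak{h}}$ commute with $\Ad_h$ for all $h\in H$, and then declares the remaining checks routine. You have written out exactly those routine checks (fundamental vector fields and equivariance) in full, which is a faithful expansion of the paper's argument rather than a different route.
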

\begin{proof}
    The only thing to note is, that the condition $\Ad_H(\mathfrak{f}) \subset
    \mathfrak{f}$ (together with $\Ad_H(\h) \subset \h$) implies that
    $\pr_{\mathfrak{h}}$ commutes with $\Ad_h$ for all $h\in H$. The necessary conditions are then easily checked.
\end{proof}

\begin{defi}
    We say that $\varphi$ \emph{reduces} to $Q$ when the
    horizontal subspaces are tangent to the subbundle $Q \subset P$. In the dual
    formulation this is true if and only if the pulled back connection takes
    values in the Lie algebra $\h$, so that no projection is necessary.
\end{defi}

Note that a projected connection as in the lemma above can be extended back to $P$. This will however yield a different
connection if the original one was not reducible. This also implies that there are in general multiple connections
on $P$ that project onto a given connection on $Q$.

\begin{rem} \label{naturalsoldering}
    Let $\iota \colon Q \to \Fr{SO}{M}$ denote a reduction of the frame bundle.
    We call the pull back $\theta^Q$ of $\theta^M$ to $Q$ again \emph{soldering
        form} of $Q$. Since the diagram
    \begin{center}
        \begin{tikzpicture}
            \matrix(m) [matrix of math nodes,row sep=3em,column sep=4em,minimum width=2em]
            {
                Q & & \Fr{SO}{M} \\
                & M & \\
            };
            \path[-stealth]
            (m-1-1) edge node [left,below] {$\pi^Q$} (m-2-2)
            (m-1-1) edge node [above] {$\iota$} (m-1-3)
            (m-1-3) edge node [right,below] {$\pi^M$} (m-2-2);
        \end{tikzpicture}
    \end{center}
    commutes, we have that for all $p\in Q$ and $\xi\in T_p Q$
    \begin{align}
        \theta^Q_p(\xi) &= (\iota^* \theta^M)_p (\xi) = \theta^M_{\iota(p)}(\xi)
        = \inv{\iota(p)} \circ (D\pi^M)_{\iota(p)} \circ D\iota_p(\xi) \\
        &= \inv{\iota(p)} \circ (D(\pi^M\circ \iota))_p (\xi) = \inv{\iota(p)} \circ
        (D\pi^Q)_p(\xi),
    \end{align}
    so that $\theta^Q_p = \inv{\iota(p)} \circ D\pi^Q_p$.
    In this sense the construction is natural.
\end{rem}

\subsection{The Correspondence of Forms}
Having a principal bundle of frames $\Fr{Gl}{M}$ (or any reduction of it) over a manifold $M$ induces a correspondence between certain forms
on the base manifold and the bundle. We will use this correspondence to compare our approach and the one taken in
\cite{gocho1992}.

\begin{lem}[Correspondence of forms]\label{corr}
    There is a one-to-one correspondence between horizontal, equivariant and
    $\mathfrak{gl}(4m)$-valued one-forms on the principal bundle of frames,
    and (global) sections of the vector bundle $T^*M\otimes \text{End}(TM)$.
\end{lem}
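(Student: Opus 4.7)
The plan is to construct the bijection explicitly in both directions and check that horizontality and equivariance precisely encode well-definedness on the base.

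First, given a horizontal, equivariant $\alpha\in\Omega^1(\Fr{Gl}{M},\LieAlg{gl}(4m))$, I would define a section $S_\alpha$ of $T^*M\otimes\operatorname{End}(TM)$ as follows. For $x\in M$ and $v\in T_xM$, pick any frame $p\in \Fr{Gl}{M}$ with $\pi(p)=x$ and any lift $\tilde v\in T_p\Fr{Gl}{M}$ with $D\pi_p(\tilde v)=v$, and set
\begin{align*}
    (S_\alpha)_x(v) := p\circ \alpha_p(\tilde v)\circ \inv{p}\in\operatorname{End}(T_xM).
\end{align*}
Independence of the choice of lift $\tilde v$ is exactly horizontality of $\alpha$: the difference of two lifts is vertical, and $\alpha$ annihilates vertical vectors. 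Independence of the choice of frame $p$ is equivariance: any other frame over $x$ has the form $p\cdot g$ for some $g\in\Gl(4m)$, and then $R_{g*}\tilde v$ is a legitimate lift at $p\cdot g$; equivariance gives $\alpha_{pg}(R_{g*}\tilde v) = \Ad_{\inv g}\alpha_p(\tilde v)$, and conjugating by $pg$ in the defining formula cancels the adjoint, yielding the same endomorphism of $T_xM$.

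Conversely, given a section $S$ of $T^*M\otimes\operatorname{End}(TM)$, define
\begin{align*}
    \alpha^S_p(\tilde v) := \inv{p}\circ S_{\pi(p)}(D\pi_p(\tilde v))\circ p \;\in\; \LieAlg{gl}(4m).
\end{align*}
This is clearly smooth and $\LieAlg{gl}(4m)$-valued. Horizontality is automatic since $D\pi$ kills vertical vectors. For equivariance, $D\pi\circ R_{g*}=D\pi$, so pulling a factor of $g$ in and out of the conjugation produces exactly $\Ad_{\inv g}\alpha^S_p(\tilde v)$.

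It then remains to verify that the two constructions are mutually inverse, which is an unwinding of definitions: $\alpha^{S_\alpha}=\alpha$ because both sides agree on any lift $\tilde v$ of $v=D\pi_p(\tilde v)$ (using horizontality to reduce to this case); and $S_{\alpha^S}=S$ because $p$ and $\inv{p}$ cancel the inserted $\inv{p}$ and $p$.

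The only subtle step is the well-definedness of $S_\alpha$: one must check that horizontality handles the lift ambiguity and equivariance handles the frame ambiguity, and that these are precisely the two pieces of freedom. Once that bookkeeping is done, everything else is formal. Note also that the statement and construction work verbatim for any reduction of $\Fr{Gl}{M}$ to a subgroup $H\subset\Gl(4m)$, provided one replaces $\LieAlg{gl}(4m)$ and $\operatorname{End}(TM)$ by the corresponding $H$-associated bundle, which is how the lemma will be applied in the sequel.
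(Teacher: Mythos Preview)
Your proof is correct and follows essentially the same approach as the paper. The only cosmetic difference is that the paper writes the section as $s(\omega)(\xi,\eta)=p\,\omega(\bar\xi)\,\theta(\bar\eta)$ using the solder form, but since $\theta(\bar\eta)=\inv{p}\eta$ this is exactly your $p\circ\alpha_p(\tilde v)\circ\inv{p}$ applied to $\eta$; the well-definedness checks and the inverse construction are identical.
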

\begin{rem}
    Note that this is a special case of the correspondence between representation
    valued forms on a principal bundle and forms with values in associated vector
    bundles on the base. In the presence of the soldering form, we can give a
    simple explicit description.
\end{rem}

\begin{proof}
    Let $\omega$ be a horizontal and equivariant one-form on the principal bundle. We induce the wanted section as follows. If $x\in M$ and $\xi, \eta\in T_{x}M$,
    let $p\in \Fr{Gl}{M}$ be any frame in the fiber of $\pi$ over $x$. Define
    \begin{align}
        s(\omega)(\xi,\eta) = p\omega(\bar{\xi})\theta(\bar{\eta}),
    \end{align}
    where $\theta$ is the solder form of $\Fr{Gl}{M}$ and $\bar{\xi}$ and
    $\bar{\eta}$ are lifts of $\xi$ and $\eta$ to $p\in \Fr{Gl}{M}$, i.e. $D\pi(\bar{\xi}) = \xi$ and
    $D\pi(\bar{\eta}) = \eta$. This is well defined, because for a different choice of lifts $\tilde{\xi}$ and $\tilde{\eta}$, the differences $\Delta\xi = \tilde{\xi} - \bar{\xi}$
    and $\Delta\eta = \tilde{\eta} - \bar{\eta}$ are vertical, but $\omega$ and
    $\theta$ are both horizontal forms. A different choice of frame $q = p.g\in
    \Fr{Gl}{M}$, leads to the calculation
    \begin{align}
        q\omega(\bar{\xi})\theta(\bar{\eta}) &= q\omega(\bar{\xi})q^{-1}(\eta) = p.g \omega(\bar{\xi}){(p.g)}^{-1}(\eta) = pg \omega(\bar{\xi}) g^{-1}p^{-1}(\eta) \\
        &= p \Ad_g(\omega(\bar{\xi})) \theta(\bar{\eta}) = p\omega(DR_{g^{-1}}\bar{\xi}) \theta(\bar{\eta}) = p\omega(\tilde{\xi}) \theta(\bar{\eta}) \nonumber \\
        &= p\omega(\bar{\xi})\theta(\bar{\eta}), \nonumber
    \end{align}
    where we have used the equivariance of $\omega$, $R_g^*\omega = \Ad_{g^{-1}}\omega$, and the fact that $DR_g$ maps lifts into lifts, since $\pi \circ R_g = \pi$ and
    therefore $D\pi \circ DR_g = D\pi$ for all $g\in \Gl(m)$. By abuse of notation
    $\overline{\eta}$ denotes a lift to both $q$ and $p$ in $T\Fr{Gl}{m}$.

    Note that we have only needed $\Gl(m)$ for the fact that $\Ad_g(\xi) = g\xi g^{-1}$, so this will be true for all principal bundles in this work, if we adjust
    the vector bundle in which the sections are taken.

    The inverse map, sending a section to a form on the principal bundle is defined by
    \begin{align}
        \omega(s)(\xi) = p^{-1}s(D\pi(\xi))p,
    \end{align}
    where $p\in \Fr{Gl}{M}$ is some frame, $\xi\in T_{p}\Fr{Gl}{M}$ and $s\in \Gamma(T^*M\otimes \text{End}(TM))$ is the section. This form is clearly a horizontal
    $\mathfrak{gl}(m)$-valued one-form. It is also equivariant because
    \begin{align}
        R_g^* \omega(s)(\xi) &= {(pg)}^{-1} s(D\pi\circ DR_g (\xi)) pg = g^{-1}p^{-1} s(D\pi(\xi)) pg \\
        &= \Ad_{g^{-1}} \omega(s)(\xi). \nonumber
    \end{align}

    It is easy to show that these two maps are inverse of each other, which concludes the proof.
\end{proof}

\begin{defi}[Corresponding forms]
    As denoted in the proof above, the section of $T^*M\otimes \text{End}(TM)$ corresponding to $\omega$ is denoted by $s(\omega)$, and the form corresponding
    to a section $s$ by $\omega(s)$.
\end{defi}

Note that this result remains true for reductions of the basis bundle, if we adjust the vector bundle in which the sections are taken. For example, the
above mentioned forms on $\Fr{SO}{M}$ correspond to sections in $T^*M \otimes
\LieAlg{so}(TM)$ and the forms on $\Fr{Sp}{M}$ to sections of $T^*M \otimes
\LieAlg{sp}(TM)$.

\begin{exa}[Difference form]
    A well known example of this correspondence is between the difference form of two connections on a principal bundle, and the difference tensor of the two
    associated covariant derivatives. This follows immediately from equation (\ref{oneillcon}).
\end{exa}

\section{Setting}\label{sec-setting}

We will recover the results from~\cite{gocho1992} for principal bundles.

Let $(M,g)$ be an Riemannian manifold of dimension $4m\in \N$, and let $M\curvearrowleft G$ be a tri-hamiltonian action of $G$ on $M$.
Let $k\in \N$ be the dimension of the Lie group $G$.
We denote the momentum map by $\mu\colon M \to \R^3 \otimes \,\,\mathfrak{g}^*$. We assume that
$0\in \R^3\otimes \,\,\mathfrak{g}^*$ is a regular value of $\mu$. This implies
that $G$ acts on the submanifold $\mu^{-1}(0)$, because equation
(\ref{firsttri}) guarantees that for $x\in \mu^{-1}(0)$, i.e.
$\mu_A(x) = 0$ for all $A$, we have
\begin{align}
    \mu_A(x.h) = \Ad_h^*\circ \mu_A(x) = 0, \qquad \forall h \in G,
\end{align}
and hence $x.h\in \mu^{-1}(0)$.

We assume further that this action is free and proper, so that the quotient $\mu^{-1}(0)/G$ is a Hausdorff space, and define $N:= \mu^{-1}(0)/G$.

\begin{center}
    \begin{tikzpicture}
        \matrix (m) [matrix of math nodes,row sep=3em,column sep=4em,minimum width=2em]
        {
            \mu^{-1}(0) & M \\
            N &  \\
        };
        \path[-stealth]
        (m-1-1) edge node [right] {$\pi$} (m-2-1)
        edge node [above] {$\iota$} (m-1-2);
    \end{tikzpicture}
\end{center}

We will show that $N$ also is a hyperkähler manifold, and that the second fundamental form of $\mu^{-1}(0)$ in $M$ is given
by the Hessian of $\mu$, compare~\cite{gocho1992} and~\cite{hitchin1987}.

\subsection{The Splitting of \texorpdfstring{$TM$}{TM}}

The tri-hamiltonian action $M \curvearrowleft G$ splits the vector bundle $TM$ over $\mu^{-1}(0)$, i.e.\ the ambient bundle
\begin{align}
    \iota^*TM,
\end{align}
in the following way.

\begin{prop}
    If $x\in \mu^{-1}(0)$, we have
    \begin{align}
        T_{x}M = T_x\mu^{-1}(0) \oplus T_x\mu^{-1}{(0)}^{\perp} = H_x \oplus \mathfrak{g} \oplus T_x\mu^{-1}{(0)}^{\perp},
    \end{align}
    where $\mathfrak{g} \subset T_xM$ is defined by the fundamental vector fields,
    i.e. the image of $K\colon \LieAlg{g} \to \Gamma(TM)$, and $H_x$ is the orthogonal complement to
    $\mathfrak{g}$ in $T\mu^{-1}(0)$ with respect to the metric $g$. All direct sums
    are orthogonal.

    Then $H_x$ is a quaternionic subspace of $T_xM$ and
    \begin{equation}
        T_x\mu^{-1}(0)^{\perp} = I \g\oplus J\g \oplus K\g.
    \end{equation}
\end{prop}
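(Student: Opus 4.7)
The proof decomposes into two independent tasks: identifying the normal space $T_x\mu^{-1}(0)^\perp$ with $I\g\oplus J\g\oplus K\g$, and checking that $H_x$ is quaternionic.

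The plan for the first task is to exploit the moment map identity $\langle \xi, d\mu_A(\eta)\rangle = \omega_A(K^\xi,\eta) = g(AK^\xi,\eta)$. Since $0$ is assumed regular, $T_x\mu^{-1}(0)$ is the joint kernel of the three differentials $d\mu_I, d\mu_J, d\mu_K$, hence
\begin{equation*}
    T_x\mu^{-1}(0) = \bigl\{\eta \in T_xM : g(AK^\xi,\eta)=0 \text{ for all } \xi\in\g,\ A\in\{I,J,K\}\bigr\},
\end{equation*}
which says precisely that $T_x\mu^{-1}(0) = (I\g + J\g + K\g)^\perp$, and therefore $T_x\mu^{-1}(0)^\perp = I\g + J\g + K\g$. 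The inclusion $\g \subset T_x\mu^{-1}(0)$ was already observed (the action restricts to $\mu^{-1}(0)$), and $H_x = \g^\perp \cap T_x\mu^{-1}(0)$ is its orthogonal complement inside $T_x\mu^{-1}(0)$ by definition, giving the first decomposition.

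To upgrade the sum $I\g + J\g + K\g$ to a direct sum at points of $\mu^{-1}(0)$, I would verify pairwise orthogonality. For $A,B\in\{I,J,K\}$ with $A\neq B$ one has $AB = \pm C$ for the third structure, so
\begin{equation*}
    g(AK^\xi, BK^\eta) = -g(K^\xi, ABK^\eta) = \mp g(K^\xi, CK^\eta) = \mp \omega_C(K^\xi, K^\eta) = \mp \langle \xi, d\mu_C(K^\eta)\rangle.
\end{equation*}
Now equivariance of $\mu_C$ combined with $\mu_C(x)=0$ forces $d\mu_C(K^\eta)_x = 0$: differentiating $\mu_C(x.\exp(t\eta)) = \Ad^*_{\exp(-t\eta)}\mu_C(x)$ at $t=0$ gives $d\mu_C(K^\eta)_x = -\operatorname{ad}^*_\eta \mu_C(x) = 0$. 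The same argument shows $\g \perp A\g$ for each $A$, which is consistent with $\g \subset T_x\mu^{-1}(0)$ and $A\g \subset T_x\mu^{-1}(0)^\perp$. Since the action is free, $K\colon \g \to T_xM$ is injective and each of $\g, I\g, J\g, K\g$ has dimension $k$; together with the pairwise orthogonality this makes the sum direct and of the right total dimension $4k$, matching the codimension $3k$ of $\mu^{-1}(0)$ plus the $k$ dimensions of $\g$.

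Finally, $H_x$ is quaternionic because the complement $\g \oplus I\g \oplus J\g \oplus K\g$ is manifestly closed under each of $I,J,K$ (e.g.\ $I(J\g) = IJ\g = K\g$, $I(I\g) = -\g$, etc.), and since $I,J,K$ are orthogonal with respect to $g$, they send the orthogonal complement of a quaternionic-invariant subspace to itself. The main obstacle in the whole argument is the orthogonality computation in the second paragraph, which is where the hyperkähler moment map condition really enters; once this is in place, everything else is linear algebra.
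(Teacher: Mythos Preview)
Your argument is correct and follows the same core route as the paper: both use the moment map identity $g(AK^\xi,\eta)=\langle\xi,d\mu_A(\eta)\rangle$ to place $I\g,J\g,K\g$ in the normal space of $\mu^{-1}(0)$, and both deduce that $H_x$ is quaternionic from the fact that its orthogonal complement $\g\oplus I\g\oplus J\g\oplus K\g$ is preserved by $I,J,K$. The one substantive difference is how directness of $I\g+J\g+K\g$ is obtained. You prove pairwise \emph{orthogonality} via the equivariance of $\mu$ at $\mu(x)=0$ (giving $d\mu_C(K^\eta)_x=0$), which in fact establishes the stronger ``all sums are orthogonal'' claim in the proposition; the paper instead uses a short algebraic trick to show the pairwise intersections vanish (e.g.\ $I\xi=J\eta$ forces $\eta=K\xi\in\g\cap T\mu^{-1}(0)^\perp=0$) and then appeals to the codimension being $3k$. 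Your version is slightly longer but more self-contained on the orthogonality assertion.
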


\begin{proof}

    If $\xi \in \mathfrak{g}$ and $\eta\in T_x\mu^{-1}(0)$ then $\eta$ is
    tangent to a level set of $\mu$, i.e.  $\text{d}\mu(\eta) = 0$, which
    implies for $A\in\left\{I,J,K\right\}$
    \begin{align}
        g(AK^{\xi}, \eta) = \omega_A(K^{\xi}, \eta) = \left<\xi,\text{d}\mu_A(\eta)\right> = 0,
    \end{align}
    hence $AK^{\xi} \in T\mu^{-1}{(0)}^{\perp}$ for all $A$.

    Furthermore the sets $I\g, J\g$ and $K\g$ have a trivial intersection.
    Indeed, assume $\xi, \eta\in\g$ with $I\xi = J\eta$. Then $K\xi = \eta$ but
    since $K\xi$ is in $T\mu^{-1}(0)^{\perp}$, $\eta = \xi = 0$.

    Since the codimension of $\mu^{-1}(0)$ in $M$ is $3k$, where $k = \dim G = \dim \mathfrak{g}$, we see that
    \begin{align}
        T_x\mu^{-1}{(0)}^{\perp} = I\mathfrak{g} \oplus J\mathfrak{g} \oplus K\mathfrak{g}.
    \end{align}

    Finally, $I,J$ and $K$ let the orthogonal complement of $H_x$ invariant and are orthogonal, so they also let $H_x$ invariant.

\end{proof}
We conclude that $TM$ splits over $\mu^{-1}(0)$ into two quaternionic sub-bundles
\begin{align} \label{splitting}
    TM = H \oplus \mathfrak{g}\otimes_{\R}\Qu.
\end{align}
Notice that while the first bundle has a quaternionic structure, the second one
has a quaternionic and a real structure. This will become important later on.

The metric $g$ of $M$ induces a metric on $H$. Since $M\curvearrowleft G$ is
hyperkähler and $g$ is $G$-invariant it furnishes $N$ with a Riemannian
metric. Similarly the quaternionic structure on $M$ induces one on $H$ (because
of the quaternionic decomposition above), which in turn induces one on $N$
compatible with the metric. This reduces the principal bundle of orthogonal
frames on $N$ to the structure group $\Sp(n)$ ($n = m - k$, $4n$ is the
dimension of $N$). We will show later that the connection of $N$ reduces so that
$N$ is indeed a hyperkähler manifold.

\subsection{The Principal Bundles}

Similar to the vector bundle $TM$, we may depict the splitting in the principal
bundle setting. Fix a splitting
\begin{equation}\label{splitbundles}
    \R^{4m} = \R^{4n} \oplus \R^{k} \oplus \R^{3k} = \Qu^n \oplus \Qu^k
\end{equation}

Now we can ask frames $p\colon \R^{4m} \to T_xM$ to respect various degrees of
the structure. Let $x\in\mu^{-1}(0)$.

\begin{itemize}
    \item $p\colon \R^{4m} \to T_xM$ with no condition at all. These frames are in the
        pull back of the frame bundle $\Fr{SO}{M}$ to $\mu^{-1}(0)$, denoted by
        $\iota^{*}\Fr{SO}{M}$.
    \item $p\colon \R^{4m} \to T_xM$ with $p(\R^{4n}\oplus \R^{k}) = T\mu^{-1}(0)$, frames adapted to the
        submanifold $\mu^{-1}(0) \subset M$. This is a principal bundle whose
        structure group is $\SO(4n+k) \times \SO(3k)$, corresponding to the
        possible rotations of the frame in $T\mu^{-1}(0)$ and
        $T\mu^{-1}{(0)}^{\perp}$. We denote it by

        \begin{align}
            \Fr{SO}{\mu^{-1}(0),M} = \left\{p\in \Fr{SO}{M} :\quad
                \im(p|_{\R^{4n+k}}) = T\mu^{-1}(0)\right\},
        \end{align}

    \item $p\colon \R^{4n} \oplus \R^k \to T_x\mu^{-1}(0)$. These frames can be
        identified with frames of $\inv{\mu}(0)$. We denote them with
        $\Fr{SO}{\mu^{-1}(0)}$.
    \item $p\colon \R^{4n} \oplus \R^k \to T_x\mu^{-1}(0)$ with $p(\R^{4n}) =
        H_x$. These frames are frames of $\inv{\mu}(0)$ adapted to the fibration
        $\pi\colon \inv{\mu}(0) \to N$. The principal bundle of these have
        structure group $\SO(4n) \times \SO(k)$ corresponding to the rotations
        in the fiber and its orthogonal complement. We denote the bundle by

        \begin{align}
            \Fr{SO}{N,\mu^{-1}(0)} = \left\{p\in \Fr{SO}{\mu^{-1}(0)} :\quad
                \im(p|_{\R^{4n}}) = H_x \right\},
        \end{align}

    \item $p\colon \R^{4n} \to H_x$. The principal bundle of these frames can
        be identified with the pull back of $\Fr{SO}{N}$ to $\inv{\mu}(0)$ (note
        that we know already that $N$ is a Riemannian manifold). We
        denote it by $\pi^*\Fr{SO}{N}$.
\end{itemize}

We may restrict the principal bundles above to quaternionic frames where it
makes sense. Fix
\begin{equation}
    \Qu^m = \Qu^n\oplus \Qu^k
\end{equation}
respecting \eqref{splitbundles}. This induces the following bundles, where all
frames are $\Qu$-linear.

\begin{itemize}
    \item $p\colon \Qu^m \to T_xM$ are the frames that make up the pull back of
        \begin{align}
            \Fr{Sp}{M} = \bigl\{p \in \Fr{SO}{M} : \text{$p$ is $\Qu$-linear}\bigr\}
        \end{align}
        to $\mu^{-1}(0)$. It is naturally a reduction of $\iota^{*}\Fr{SO}{M}$
        to quaternionic frames, has structure group $\Sp(m)$ and will be denoted
        by $\iota^*\Fr{Sp}{M}$.

    \item $p\colon \Qu^m \to T_xM$ with $p(\Qu^n) = H_x$ and $p(\Qu^k) =
        \g\otimes \Qu$ respecting both the quaternionic and real structure. We
        denote this principal bundle with structure group $\Sp(n)\times \SO(k)$
        by

        \begin{align}
            \Fr{Sp}{N,M} = \left\{p \in \iota^*\Fr{Sp}{M} :\,\,
                \text{im}(p|_{\Qu^n}) = H_x, \,\,\text{im}(p|_{\Qu^{k}}) =
                \mathfrak{g}\otimes\Qu\right\}
        \end{align}
        The frames are adapted to the quaternionic splitting of $T_xM = H_x
        \oplus \g\otimes\Qu$ and respect the real structure of the second,
        $p(\Rea(\Qu^k)) = \Rea(\g\otimes \Qu) = \g$, so in particular (because
        $I,J,K$ are orthogonal) respect the splitting $T\inv{\mu}(0) \oplus
        T\inv{\mu}(0)^{\perp}$.
    \item $p\colon \Qu^n \to H_x$ are the frames of the pulled back bundle
        $\Fr{Sp}{N}$ to $\inv{\mu}(0)$ and is denoted by $\pi^*\Fr{Sp}{N}$.
\end{itemize}

There are plenty of natural maps between these bundles. We will be using the
following.

\begin{itemize}
    \item \emph{Reductions to quaternionic frames, denoted by i:} Some of the real frame
        bundles can be reduced to quaternionic frames, which induces maps from
        the quaternionic world to the real world. This is
        obviously the case for $\Fr{Sp}{M} \to \Fr{SO}{M}$, $\Fr{Sp}{N} \to
        \Fr{SO}{N}$ and their pull backs to $\inv{\mu}(0)$. Finally this is also
        the case for $\Fr{Sp}{N,M} \to \Fr{SO}{\inv{\mu}(0), M}$, because a
        quaternionic frame that respects the splitting $T_x\inv{\mu}(0) \oplus
        T_x\inv{\mu}(0)^{\perp}$, automatically respects the quaternionic
        splitting $H_x \oplus \g\otimes \Qu$, as
        can be seen by applying one of the complex structures to
        $T_x\inv{\mu}(0)^{\perp}$. In
        other words, $(\SO(4n+k)\times \SO(3k))\cap \Sp(m) \cong \Sp(n) \times
        \SO(k)$.
    \item \emph{Reduction to more structured frames, denoted by j:} Some of
        the bundles are simply restrictions of other bundles to frames
        respecting more structures. This is the case for
        \begin{equation}
            \Fr{SO}{\mu^{-1}(0), M} \to \iota^*\Fr{SO}{M}, \qquad
            \Fr{SO}{N,\mu^{-1}(0)} \to \Fr{SO}{\mu^{-1}(0)}
        \end{equation}
        and
        \begin{equation}
            \Fr{Sp}{N,M} \to \iota^*\Fr{Sp}{M}.
        \end{equation}
    \item \emph{Induced maps by pull backs, also denoted by j:} There are of
        course canonical maps $\iota^*\Fr{SO}{M} \to \Fr{SO}{M}$ and similar for
        $\pi\colon \inv{\mu}(0) \to N$ and the quaternionic bundles.

    \item \emph{Restrictions of frames, denoted by k:} Some bundles allow
        natural projections to other bundles by restricting the frame to a
        subspace of its domain. This is the case for
        \begin{equation}
            \Fr{SO}{\inv{\mu}(0),M} \to \Fr{SO}{\inv{\mu}(0)}, \quad
            \Fr{SO}{N,\inv{\mu}(0)} \to \pi^*\Fr{SO}{N}
        \end{equation}
        and
        \begin{equation}
            \Fr{Sp}{N,M} \to \pi^*\Fr{Sp}{N}, \qquad \Fr{Sp}{N,M} \to
            \Fr{SO}{N,\mu^{-1}(0)}.
        \end{equation}
\end{itemize}

The aforementioned bundles are depicted in the following diagram.

\begin{center}
    \begin{sideways}
        \begin{tikzpicture}[
            back line/.style={densely dotted},
            cross line/.style={preaction={draw=white, -, line width=6pt}},
            descr/.style={fill=white,inner sep=2.5pt},scale=0.8, every
            node/.style={scale=0.72}]
            \matrix (m) [matrix of math nodes,row sep=3em,column sep=2em, text height=1.5ex, text depth=0.25ex]
            {
                & \pi^*\Fr{SO}{N} & \Fr{SO}{N,\mu^{-1}(0)} & \Fr{SO}{\mu^{-1}(0)} & &
                \Fr{SO}{\mu^{-1}(0), M} & \iota^*\Fr{SO}{M} &  & \Fr{SO}{M}\\
                \pi^*\Fr{Sp}{N} & & & & \Fr{Sp}{N,M} & \iota^*\Fr{Sp}{M} & & \Fr{Sp}{M} &\\
                & \Fr{SO}{N} & & & & & & & \\
                \Fr{Sp}{N} & & & & & & & &\\
                & & & & \mu^{-1}(0) & & M & & \\
                & & & & N & & & & \\
            };
            \path[->]
            (m-5-5) edge node [right] {$\pi$} (m-6-5)
            edge node [above] {$\iota$} (m-5-7)
            (m-2-8) edge node [right] {$\pi^{\Qu}_M$} (m-5-7)
            (m-1-9) edge [bend left = 30] node [right] {$\pi^{\R}_M$} (m-5-7)
            (m-1-7) edge [back line, bend left = 30] node [right] {$\hat{\pi}^{\R}$} (m-5-5)
            (m-1-6) edge [back line, bend right = 10] node [descr] {$\tilde{\pi}^{\R}$} (m-5-5)
            (m-2-5) edge node [descr] {$\tilde{\pi}^{\Qu}$} (m-5-5)
            (m-2-6) edge [bend left = 5] node [right] {$\hat{\pi}^{\Qu}$} (m-5-5)
            (m-3-2) edge [back line] node [below] {$\pi^{\R}_N$} (m-6-5)
            (m-1-2) edge [back line] node [right] {$\bar{\pi}^{\R}$} (m-5-5)
            (m-4-1) edge node [above] {$\pi^{\Qu}_N$} (m-6-5)
            (m-2-1) edge node [left] {$j_4^{\Qu}$} (m-4-1)
            (m-4-1) edge [back line] node [above, sloped] {$i_5$} (m-3-2)
            (m-1-2) edge [back line] node [left=8pt, above=5pt] {$j_4^{\R}$} (m-3-2)
            (m-2-1) edge [cross line] node [above] {$\bar{\pi}^{\Qu}$} (m-5-5)
            (m-1-6) edge node [above] {$k_1^{\R}$} (m-1-4)
            (m-1-3) edge node [above] {$j_3^{\R}$} (m-1-4)
            (m-1-3) edge node [above] {$k_2^{\R}$} (m-1-2)
            (m-1-4) edge [back line] node [above, right] {$\pi_{\mu}$} (m-5-5)
            (m-1-3) edge [back line] node [above, right] {$\pi^{\prime}$} (m-5-5)
            (m-2-1) edge node [above, sloped] {$i_4$} (m-1-2)
            (m-1-6) edge node [above] {$j_2^{\R}$} (m-1-7)
            (m-1-7) edge node [above] {$j_1^{\R}$} (m-1-9)
            (m-2-5) edge node [above, sloped] {$i_3$} (m-1-6)
            (m-2-6) edge node [above, sloped] {$i_2$} (m-1-7)
            (m-2-8) edge node [above, sloped] {$i_1$} (m-1-9)
            (m-2-5) edge [cross line] node [above] {$k$} (m-1-3)
            (m-2-5) edge [cross line] node [above] {$j_2^{\Qu}$} (m-2-6)
            (m-2-5) edge [cross line] node [right=15pt,above] {$k_1^{\Qu}$} (m-2-1)
            (m-2-6) edge node [above right] {$j_1^{\Qu}$} (m-2-8);
        \end{tikzpicture}
    \end{sideways}
\end{center}

\section{The Induced Connections}
In this chapter we will start with the Levi Civita connection on $\Fr{SO}{M}$ and chase it through the diagram. This will show that
$N$ is indeed a hyperkähler manifold and recover the results from~\cite{gocho1992}.

\subsection{Forms on \texorpdfstring{$\Fr{Sp}{M}$}{Fr(Sp)(M)}}

Starting with the solder form $\theta^{M,\R}$ and the Levi Civita connection $\varphi^{M,\R}$ on $\Fr{SO}{M}$,
we first induce the forms $\theta^{M,\Qu}$ and $\varphi^{M,\Qu}$ on $\Fr{Sp}{M}$, by pulling back with $i_1$,
\begin{align*}
    \theta^{M,\Qu} = i_1^*\theta^{M,\R}, \qquad
    \varphi^{M,\Qu} = i_1^*\varphi^{M,\R}.
\end{align*}
Since $M$ is a hk-manifold, $\varphi^{M,\Qu}$ is a connection on
$\Fr{Sp}{M}$ satisfying the pulled back structure equation
\begin{align*}
    \de \theta^{M,\Qu} + \varphi^{M,\Qu} \wedge \theta^{M,\Qu} = 0.
\end{align*}

As remarked in (\ref{naturalsoldering}) $\theta^{M,\Qu}$ is again the soldering
form of $\Fr{Sp}{M}$, hence $\varphi^{M,\Qu}$ is a torsion free connection on
$\Fr{Sp}{M}$.

\subsection{Forms on \texorpdfstring{$\iota^*\Fr{SO}{M}$}{i*Fr(SO)(M)} and
    \texorpdfstring{$\iota^*\Fr{Sp}{M}$}{i*Fr(Sp)(M)}}

The solder forms and connection forms on $\Fr{SO}{M}$ and $\Fr{Sp}{M}$ further
induce connections on the ambient principal bundles $\iota^*\Fr{Sp}{M}$ and
$\iota^*\Fr{SO}{M}$ which we will denote by
$\hat{\varphi}^{\R},\hat{\theta}^{\R}$ and
$\hat{\varphi}^{\Qu},\hat{\theta}^{\Qu}$ with the obvious choice. The
$\hat{\varphi}$ are connections, since we do not change the fibers of the
principal bundle (although some may be discarded).  It is also a torsion free
connection, since the structural equation $\de\theta + \varphi \wedge \theta =
0$ survives the pull back and by
using remark (\ref{naturalsoldering}), the pulled back solder forms are natural

\begin{align*}
    \hat{\theta}^{\R}_p(\xi) = p^{-1}\circ D\hat{\pi}^{\R}_p(\xi), \qquad
    \hat{\theta}^{\Qu}_q(\eta) = q^{-1}\circ D\hat{\pi}^{\Qu}_q(\eta),
\end{align*}
where $p\in \iota^*\Fr{SO}{M}$, $\xi \in T_p\iota^*\Fr{SO}{M}$ and $q\in \iota^*\Fr{Sp}{M}$, $\eta \in T_q\iota^*\Fr{Sp}{M}$.

\subsection{Forms on \texorpdfstring{$\Fr{SO}{\mu^{-1}(0),
            M}$}{Fr(SO)(u(-1)(0),M)}}\label{splittingsec}

The next step is to transfer these forms to the principal bundle
\begin{align*}
    \Fr{SO}{\mu^{-1}(0), M} = \left\{p\in \iota^*\Fr{SO}{M}: \text{im}(p|_{\R^{4n+k}}) = T\mu^{-1}(0)\right\},
\end{align*}
which has structure group $\SO(4n+k) \times \SO(3k)$.

Different to before is that $\Fr{SO}{\mu^{-1}(0), M}$ is in general not horizontal in the ambient bundle, hence we
need to project in order to get a connection.

Lemma (\ref{reduction}) allows us to define connections on the adapted
frame bundles $\Fr{SO}{\mu^{-1}(0),M}$ and $\Fr{Sp}{N,M}$. With the inclusion
\begin{align*}
    i\colon \SO(4n+k)\times \SO(3k) \to \SO(4m), \qquad (A,B) \mapsto \left(\begin{array}{cc}
            A & 0 \\ 0 & B
        \end{array}\right),
\end{align*}
we get the Lie algebra decomposition (as vector spaces)
\begin{align*}
    \mathfrak{so}(4m) = \mathfrak{so}(4n+k) \oplus \mathfrak{so}(3k) \oplus \mathfrak{f},
\end{align*}
where
\begin{align*}
    \mathfrak{f} = \left\{\left(\begin{array}{cc}
                0 & C \\ -C^t & 0
            \end{array}\right) \in \mathfrak{so}(4m) : C\in \text{Mat}(4n+k,3k)\right\}.
\end{align*}
If $A\in \text{im}(i)$ and $\xi \in \mathfrak{f}$, then $\Ad_A(\xi) = A\xi A^{-1} \in \mathfrak{f}$, hence we have a
connection $\tilde{\varphi}^{\R} = \pr_{\mathfrak{so}(4n+k) \oplus \mathfrak{so}(3k)} \circ j_2^{\R *}\hat{\varphi}^{\R}$
on $\Fr{SO}{\mu^{-1}(0), M}$. This connection naturally decomposes into two equivariant one-forms $\phi_1^{\R}$ and $\phi_2^{\R}$ with values in
$\mathfrak{so}(4n+k)$ and $\mathfrak{so}(3k)$ respectively.

We can go ahead and extend $\tilde{\varphi}^{\R}$ back to $\iota^*\Fr{SO}{M}$, which gives us a connection $\hat{\varphi}^{\prime \R}$. The difference
form
\begin{align}
    \hat{\tau}^{\R} = \hat{\varphi}^{\R} - \hat{\varphi}^{\prime\R},
\end{align}
is a equivariant horizontal one form, hence the pull back
\begin{align} \label{redequone}
    \tau^{\R} =  j_2^{\R *} \hat{\tau}^{\R} = j_2^{\R *} \hat{\varphi}^{\R} - \tilde{\varphi}^{\R}
\end{align}
is also.

The induced connection $\tilde{\varphi}^{\R}$ is torsion free, since $\tilde{\theta}^{\R}$, the pull back of the solder form,
is again the solder form on $\Fr{SO}{\mu^{-1}(0),N}$. We pull back the structure equation $d\hat{\theta}^{\R} + \hat{\varphi}^{\R} \wedge \hat{\theta}^{\R} = 0$
to get
\begin{align}
    d\tilde{\theta}^{\R} + (j_2^{\R *}\hat{\varphi}^{\R}) \wedge \tilde{\theta}^{\R} = d\tilde{\theta}^{\R} + (\tilde{\varphi}^{\R} + \tau^{\R}) \wedge \tilde{\theta}^{\R} = 0.
\end{align}
Since $\tilde{\theta}^{\R}$ has values in $\R^{4n+k}$, we can split the equation
into the following two equations

\begin{align}
    d\tilde{\theta}^{\R} + \tilde{\varphi}^{\R} \wedge \tilde{\theta}^{\R} &= 0, \\
    \tau^{\R} \wedge \tilde{\theta}^{\R} &= 0, \label{symm}
\end{align}
which shows that $\tilde{\varphi}^{\R}$ is indeed torsion free.

$\tau^{\R}$ splits naturally into two forms with values in the top right matrices and bottom left matrices. Let $\tau_1^{\R}$ denote
the one that has values in the bottom left. Hence we have the splitting

\begin{align}
    j_2^{\R *} \hat{\varphi}^{\R} = \left(\begin{array}{cc} \phi_1^{\R} &
            -(\tau_1^{\R})^t \\ \tau_1^{\R} & \phi_2^{\R} \end{array}\right).
\end{align}

Using lemma (\ref{corr}) to identify $\tau_1^{\R}$ with a $(2,1)$-tensor on $\mu^{-1}(0)$, via
\begin{align}
    s(\tau_1^{\R})(\xi,\eta) = p\tau_1^{\R}(\bar{\xi})\tilde{\theta}^{\R}(\bar{\eta}),
\end{align}
where $p$ is a frame in $F(\mu^{-1}(0), M)$ and $\bar{\xi}, \bar{\eta}$ are lifts (compare lemma (\ref{corr})).

\begin{prop}[Second fundamental form]\label{secfunprop}
    $s(\tau_1^{\R})$ is the second fundamental form of $\mu^{-1}(0)$ in $M$.
\end{prop}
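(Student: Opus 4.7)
I would compute $s(\tau_1^{\R})(\xi,\eta)$ for $\xi,\eta \in T_x\mu^{-1}(0)$ by extending $\eta$ to a local vector field $Y$ tangent to $\mu^{-1}(0)$ and then translating everything to the principal bundle $\Fr{SO}{\mu^{-1}(0),M}$. The point is that in any adapted frame $p \in \Fr{SO}{\mu^{-1}(0),M}$, the associated equivariant function $y(p) = p^{-1}(Y_{\pi(p)})$ automatically has its last $3k$ components zero, so only the off-diagonal block $\tau_1^{\R}$ of the pulled-back connection can produce a normal component.

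Concretely, I would first observe that for any lift $\bar\eta$ of $\eta$ to $p$, $\tilde\theta^{\R}(\bar\eta) = p^{-1}(\eta) = y(p)_1$, where the subscript denotes the $\R^{4n+k}$-component; so
\begin{align*}
    s(\tau_1^{\R})(\xi,\eta) \;=\; p\,\tau_1^{\R}(\bar\xi)\,y(p)_1 \;\in\; T_x\mu^{-1}(0)^\perp.
\end{align*}
Next I would apply the standard principal-bundle formula for the Levi-Civita covariant derivative on the ambient bundle,
\begin{align*}
    p^{-1}\bigl(\nabla^M_\xi Y\bigr) \;=\; \bar\xi(y) \;+\; \bigl(j_2^{\R *}\hat\varphi^{\R}\bigr)(\bar\xi)\cdot y(p),
\end{align*}
evaluated at $p$, for $\bar\xi$ any lift of $\xi$ in $T_p \Fr{SO}{\mu^{-1}(0),M}$. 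Since $y$ has vanishing second component along $\mu^{-1}(0)$, the term $\bar\xi(y)$ is tangential, and the block decomposition of $j_2^{\R *}\hat\varphi^{\R}$ shows that the only contribution to the $\R^{3k}$-component comes from $\tau_1^{\R}(\bar\xi)\,y(p)_1$. Applying $p$ and projecting to the normal bundle, I obtain
\begin{align*}
    \bigl(\nabla^M_\xi Y\bigr)^\perp \;=\; p\,\tau_1^{\R}(\bar\xi)\,y(p)_1 \;=\; s(\tau_1^{\R})(\xi,\eta),
\end{align*}
which is exactly the definition of $\SF(\xi,\eta)$.

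The only genuinely delicate point is book-keeping: one must check that $s(\tau_1^{\R})(\xi,\eta)$ depends only on $\eta$ at $x$ (not on the chosen extension $Y$), which follows because the formula above is tensorial in $\eta$ once $\bar\xi(y)$ has been separated off. Finally, I would note that the expected symmetry of the second fundamental form is already encoded in the torsion-freeness identity $\tau^{\R}\wedge \tilde\theta^{\R}=0$ from equation (\ref{symm}), so no separate verification of symmetry is needed. I expect no real obstacle beyond keeping the various identifications and block structures aligned.
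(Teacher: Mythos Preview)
Your proposal is correct and follows essentially the same route as the paper: both arguments evaluate the principal-bundle formula for the covariant derivative (equation~\eqref{oneillcon} in the paper) at an adapted frame $p\in\Fr{SO}{\mu^{-1}(0),M}$ and read off the normal component from the block decomposition of $j_2^{\R *}\hat\varphi^{\R}$, with symmetry coming from~\eqref{symm}. The only cosmetic difference is that the paper computes $\SF(X,Y)=\nabla^M_X Y-\nabla^{\mu^{-1}(0)}_X Y$ (which requires a forward reference identifying $\phi_1^{\R}$ with the intrinsic Levi--Civita connection), whereas you take the normal projection $(\nabla^M_\xi Y)^\perp$ directly and use that $\bar\xi(y)$ stays in $\R^{4n+k}$; your organization is slightly cleaner in that it avoids the forward reference.
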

\begin{proof}
    In the next subsection we will show that $\tilde{\varphi}^{\R}$ is the pull back
    of the Levi Civita connection on $\Fr{SO}{\mu^{-1}(0)}$. The covariant
    derivative of a connection $\varphi$ with soldering form $\theta$ is given by
    \begin{align} \label{oneillcon}
        \nabla_{t}X = p(\bar{t}\theta(\bar{X}) + \varphi(\bar{t})\theta(\bar{X}_p)),
    \end{align}
    where $\bar{t}$ and $\bar{X}$ are lifts of the tangent vector $t$ and vector
    field $X$ to a frame $p$ (see e.g.~\cite[6.4]{bishop},
    but note that this book has a very unusual sign convention for the second fundamental form).
    Hence the second fundamental form is given by
    \begin{align}
        \SF(X, Y) &= \nabla_X^{M}Y - \nabla_X^{\mu^{-1}(0)}Y
        = p(j_2^{\R *} \hat{\varphi}^{\R}(\bar{X}) -
            \phi^{\R}_1(\bar{X}))\tilde{\theta}^{\R}(\bar{Y}) \\
        &= p(j_2^{\R *} \hat{\varphi}^{\R}(\bar{X}) -
        \tilde{\varphi}^{\R}(\bar{X}))\tilde{\theta}^{\R}(\bar{Y})
        =  p\tau^{\R}(\bar{X}_{p})\tilde{\theta}^{\R}(\bar{Y}_{p}).
    \end{align}
    Here we have used that $X$ and $Y$ are tangent to $\mu^{-1}(0)$ and hence $\phi_2^{\R}(\bar{X}_p)\tilde{\theta}^{\R}(\bar{Y}_p) = 0$.
    Note that $\SF$ is symmetric, because $\tau^{\R} \wedge \tilde{\theta}^{\R} = 0$, by equation (\ref{symm}). Since the second fundamental
    form is only defined for tangent vectors to $\mu^{-1}(0)$ and takes values orthogonal to $\mu^{-1}(0)$, we have to restrict
    $\tau^{\R}$ to $\tau_1^{\R}$ as described above.
\end{proof}

\begin{prop}[Second fundamental form as Hessian] Let $f\colon M \to V$ be a
    smooth map, where $M$ is a Riemannian manifold and $V$ a vector space.
    Assume further, that $0\in V$ is a regular value. $Df\colon TM \to V$
    identifies every fiber of the bundle $Tf^{-1}{(0)}^{\perp}$ with $V$, and
    under this identification the negative of the Hessian matrix of $f$ equals
    the second fundamental form of $f^{-1}(0)$ in $M$.  \end{prop}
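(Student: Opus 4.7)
The plan is a short calculation in the spirit of the Gauss equation, making use of the fact that along $f^{-1}(0)$ the map $f$ vanishes identically, so that tangential derivatives of $Df(Y)$ for tangential $Y$ vanish.

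First I would check that the identification $Df|_{Tf^{-1}(0)^\perp}\colon Tf^{-1}(0)^{\perp} \to V$ makes sense: since $0$ is a regular value, $Df_x$ is surjective for every $x\in f^{-1}(0)$, and its kernel is exactly $T_xf^{-1}(0)$, so $Df_x$ restricts to a linear isomorphism of $T_xf^{-1}(0)^{\perp}$ onto $V$. In particular, to identify $\SF(X,Y)$ with an element of $V$ it suffices to apply $Df$ to it.

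Next I would recall that for a $V$-valued smooth map $f$ the Hessian at $x\in M$ is defined, for tangent vectors $X,Y$ extended to vector fields, by
\begin{align*}
    \operatorname{Hess}(f)(X,Y) = X(Df(Y)) - Df(\nabla_X^M Y),
\end{align*}
and is independent of the extensions. Now let $X,Y$ be vector fields on $M$ which along $f^{-1}(0)$ are tangent to the submanifold. Since $Df(Y)$ vanishes identically on $f^{-1}(0)$ and $X_x\in T_xf^{-1}(0)$, we get $X(Df(Y))_x = 0$. Hence
\begin{align*}
    \operatorname{Hess}(f)(X,Y)_x = -Df\bigl(\nabla_X^M Y\bigr)_x.
\end{align*}

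Now I apply the Gauss formula $\nabla_X^M Y = \nabla_X^{f^{-1}(0)} Y + \SF(X,Y)$; the first summand is tangent to $f^{-1}(0)$ and therefore lies in $\ker Df$, so $Df(\nabla_X^M Y) = Df(\SF(X,Y))$. Combining gives
\begin{align*}
    Df(\SF(X,Y)) = -\operatorname{Hess}(f)(X,Y),
\end{align*}
which, under the identification of $Tf^{-1}(0)^{\perp}$ with $V$ via $Df$, is exactly the claim.

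The only mild obstacle is the bookkeeping of conventions: one must be sure the Hessian convention used (which here is the intrinsic one using $\nabla^M$) is the one the authors refer to, and that both sides are genuinely well-defined tensors — which follows from the standard argument that $\operatorname{Hess}(f)(X,Y)$ does not depend on the chosen extensions of $X,Y$, and that $\SF$ is tensorial and symmetric (the latter being already recorded after Proposition~\ref{secfunprop} via $\tau^{\R}\wedge\tilde\theta^{\R}=0$). No further lemma from the paper is required beyond the Gauss decomposition implicit in the definition of $\SF$.
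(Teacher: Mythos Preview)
Your proof is correct and essentially identical to the paper's own argument: both verify the isomorphism $Df|_{Tf^{-1}(0)^\perp}$ via the kernel of $Df$, write $\operatorname{Hess}(f)(X,Y)=X(Df(Y))-Df(\nabla_X^M Y)$, kill the first term because $Df(Y)$ vanishes along $f^{-1}(0)$, and then apply the Gauss decomposition $\nabla_X^M Y=\nabla_X^{f^{-1}(0)}Y+\SF(X,Y)$ together with $\nabla_X^{f^{-1}(0)}Y\in\ker Df$. The only cosmetic difference is that the paper compresses this into a single chain of equalities.
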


\begin{proof}
    The first claim is just the dimension formula for a linear map,
    \begin{align}
        Df_p\colon Tf^{-1}(0)\oplus Tf^{-1}{(0)}^{\perp} \to V,
    \end{align}
    which has kernel $Tf^{-1}(0)$. Note that the second equality only holds for vector fields tangent to $f^{-1}(0)$, since
    the second fundamental form is only defined for these. Let $X$ and $Y$ be vector fields tangent to $f^{-1}(0)$. Then
    \begin{align}
        \text{Hess}(f)(X,Y) &= X(Yf) - Df(\nabla_X^{M}Y) \\
        &= X(\underbrace{Df(Y)}_{=0}) - \underbrace{Df(\nabla_X^{\mu^{-1}(0)}Y)}_{=0} - Df \SF(X,Y) \nonumber \\
        &= - Df\SF(X,Y) \nonumber
    \end{align}
\end{proof}
In this sense, $\tau_1^{\R}$ is associated with the $-\text{Hess}(\mu)$ by the
two aforementioned propositions.

\subsection{Forms on \texorpdfstring{$\Fr{SO}{\mu^{-1}(0)}$}{Fr(SO)(u(-1)(0))}}
Recall that the torsion free connection $\tilde{\varphi}^{\R}$ decomposes into two one forms $\phi_1^{\R}$ and $\phi_2^{\R}$.
$\phi_1^{\R}$ with values in $\mathfrak{so}(4n+k)$ induces a connection on $\Fr{SO}{\mu^{-1}(0)}$, because
\begin{align}
    \phi_1^{\R} ({(Dk_1^{\R})}^{-1}(0)) &= 0, \\
    R_g^*\phi_1^{\R} &= \phi_1^{\R} \qquad \forall g \in O(3k) \subset O(4m),
\end{align}
which is true because $Dk_1^{\R}\colon \mathfrak{so}(4n+k) \oplus
\mathfrak{so}(3k) \to \mathfrak{so}(4n+k)$ is the projection.
It allows us to define
\begin{align}
    \varphi^{\mu^{-1}(0)}(\eta) = \phi_1^{\R}(\tilde{\eta}), \qquad \tilde{\eta} \in {(Dk_1^{\R})}_q^{-1}(\eta),
\end{align}
i.e. $k_1^{\R *} \varphi^{\mu^{-1}(0)} = \phi_1^{\R}$.
Since the solder form on $\mu^{-1}(0)$ pulled back to $F(\mu^{-1}(0), M)$ is the form $\tilde{\theta}^{\R}$,
we get the equation
\begin{align}
    d\theta^{\mu^{-1}(0), \R} + \varphi^{\mu^{-1}(0)} \wedge \theta^{\mu^{-1}(0), \R} = 0,
\end{align}
and see that $\varphi^{\mu^{-1}(0)}$ is the unique Levi Civita connection on $\mu^{-1}(0)$.

\subsection{Riemannian Submersions} \label{rmsub} The next step involves
understanding Riemannian submersions on the level of frame bundles. Since there
is no exposition of this known to the author, we will describe it in a general
setting, and apply it to the reduction afterwards.

Let us at this point recall the basics of the Riemannian submersion theory of
O'Neill~\cite{oneill1966}.  A Riemannian submersion $\pi\colon M^m \to B^b$ is a
smooth map between two Riemannian manifolds such that $\pi$ is a submersion and
$D\pi_x|_{H_x} \colon H_{x} \to T_{\pi(x)}B$ is a isometry for all $x\in M$,
where $H_x$ is the orthogonal complement of $\ker(D\pi)\subset T_{x}M$.

To such a Riemannian submersion we may associate two important $(2,1)$-tensor
fields on $M$,

\begin{align} T_{X}Y &=
    \mathcal{H}\nabla^M_{\mathcal{V}X}\mathcal{V}Y +
    \mathcal{V}\nabla^M_{\mathcal{V}X}\mathcal{H}Y \\ A_{X}Y &=
    \mathcal{H}\nabla^M_{\mathcal{H}X}\mathcal{V}Y +
    \mathcal{V}\nabla^M_{\mathcal{H}X}\mathcal{H}Y,
\end{align}
where $\mathcal{H}$ and $\mathcal{V}$ are the horizontal and vertical projection
in $TM$, respectively. $T$ is known to be the second fundamental form of each
fiber (if vertical vector fields are plugged in), whereas $A$ is related to the
obstruction to integrability of the horizontal distribution on $M$. An important
fact is that \begin{align} A_{X}Y = \frac{1}{2}\mathcal{V}\left[X,Y\right],
\end{align} for horizontal vector fields $X$ and $Y$. If the Riemannian
submersion $\pi\colon M\to B$ should also happen to be a principal bundle, and
we fix the connection corresponding to the horizontal subspaces, then $2A_{X}Y =
-R(X,Y)$, where $R(X,Y)$ is the curvature of the connection, if we identify the
vertical tangent space with the Lie algebra as usual.

In the world of principal bundles this can be expressed the following way. Let
$\Fr{}{M}$ be the principal bundle of frames and $\Fr{}{B,M}$
the reduction to adapted frames on $M$. Here a frame is adapted if it respects the splitting of $TM$ into horizontal and vertical parts,
i.e.
\begin{align}
    \Fr{}{B,M} = \left\{p\in \Fr{}{M} : \text{$\text{im} (p|_{\R^{b}})$ is horizontal} \right\}.
\end{align}

Then a pull back of the Levi Civita connection $\phi$ on $\Fr{}{M}$ and the
solder form $\theta$ gives, after a suitable projection, a connection $\psi$ on
$\Fr{}{B,M}$ with structure equation
\begin{align} \label{oneillstruc}
    d\theta^{\prime} + \psi\wedge \theta^{\prime} + \tau\wedge \theta^{\prime} = 0,
\end{align}
where $\theta^{\prime}$ is the pull back of the solder form, $\psi$ the
projected connection and $\tau = i^*\phi - \psi$, where $i\colon \Fr{}{B,M} \to
\Fr{}{M}$
is the inclusion. We see that $\tau$ is an obstruction to the integrability of
the horizontal distribution, because for a product manifold $M = M_1\times M_2$
we have the commutative diagram
\begin{center}
    \begin{tikzpicture}
        \matrix (m) [matrix of math nodes,row sep=3em,column sep=4em,minimum width=2em]
        {
            & \Fr{}{M} & \\
            \Fr{}{M_1} & \Fr{}{M_1,M_2} & \Fr{}{M_2}\\
            M_1 & M & M_2 \\
        };
        \path[-stealth]
        (m-2-1) edge (m-3-1)
        (m-2-2) edge (m-1-2)
        (m-2-2) edge (m-2-1)
        (m-2-2) edge (m-2-3)
        (m-2-2) edge (m-3-2)
        (m-3-2) edge (m-3-1)
        (m-3-2) edge (m-3-3)
        (m-2-3) edge (m-3-3);
    \end{tikzpicture}
\end{center}
and the connection on $\Fr{}{M}$ reduces to a connection on $\Fr{}{M_1,M_2}$, which
is the sum of the connections pulled back from $\Fr{}{M_i}$.
On the other hand, from the construction of the last chapter, we also know that
$\tau$ is related to the second fundamental forms of the fibers.

The notion of horizontal and vertical projection extends to horizontal forms on
$\Fr{}{B,M}$, via
\begin{align}
    \tau_h(\xi) = \tau (\overline{\mathcal{H}D\pi^{\prime}(\xi)})  \\
    \tau_v(\xi) = \tau (\overline{\mathcal{V}D\pi^{\prime}(\xi)}),
\end{align}
where $\pi^{\prime}$ is the principal bundle map of $\Fr{}{B,M}$ and the over line is a lift with respect to that map. It is easy to see that this is well
defined for a horizontal form, since it does not depend on the choice of lift. Note also that by definition $\tau = \tau_h + \tau_v$. The following
proposition is the main result of this section.
\begin{prop}[O'Neill on Principal Bundles]\label{oneillprop}
    $\tau_v$ corresponds to $T$ and $\tau_h$ corresponds to $A$.
\end{prop}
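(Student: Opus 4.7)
The plan is to use the corresponding forms machinery of Lemma~\ref{corr} together with the covariant-derivative formula \eqref{oneillcon} to express $\tau$, $\tau_h$ and $\tau_v$ as $(2,1)$-tensors on $M$, and then read off the matches with $A$ and $T$.

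First I would identify the covariant derivative $\nabla^{\psi}$ associated to the projected connection $\psi$ on $\Fr{}{B,M}$. Because $\psi = \operatorname{pr}_{\mathfrak{so}(b)\oplus \mathfrak{so}(m-b)} \circ\, i^*\phi$, the endomorphism-valued $1$-form corresponding to $\psi$ takes values in the block-diagonal subalgebra and therefore preserves the splitting $TM = \mathcal{H}\oplus \mathcal{V}$. Using \eqref{oneillcon} and the fact that $\psi$ induces the Levi-Civita connections of both the fibers and the base (via a pull-back/reduction analogous to the one in Section~\ref{splittingsec} and the previous subsection on $\Fr{SO}{\mu^{-1}(0)}$), I would argue that
\begin{align*}
    \nabla^{\psi}_{X}Y \;=\; \mathcal{H}\nabla^{M}_{X}\mathcal{H}Y \;+\; \mathcal{V}\nabla^{M}_{X}\mathcal{V}Y,
\end{align*}
since the off-diagonal blocks of $i^*\phi$, which mix $\mathcal{H}$ and $\mathcal{V}$, are exactly what is killed by the projection.

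Next I would invoke the \emph{difference form} example: $\tau = i^*\phi - \psi$ is a horizontal equivariant $1$-form with values in $\mathfrak{f}$, and by Lemma~\ref{corr} the corresponding section $s(\tau)\in \Gamma(T^*M\otimes \operatorname{End}(TM))$ is precisely the difference of the two covariant derivatives. Combining with the previous step,
\begin{align*}
    s(\tau)(X,Y) \;=\; \nabla^{M}_{X}Y - \nabla^{\psi}_{X}Y \;=\; \mathcal{V}\nabla^{M}_{X}\mathcal{H}Y \;+\; \mathcal{H}\nabla^{M}_{X}\mathcal{V}Y.
\end{align*}
Note the off-diagonal nature of the right-hand side is consistent with $\tau$ taking values in $\mathfrak{f}$, which acts on $TM$ by swapping horizontal and vertical components.

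Finally, I would unwind the definitions of $\tau_h$ and $\tau_v$. Since $\tau$ is horizontal, the formulas $\tau_h(\bar{\xi}) = \tau(\overline{\mathcal{H}D\pi'(\bar\xi)})$ and $\tau_v(\bar{\xi}) = \tau(\overline{\mathcal{V}D\pi'(\bar\xi)})$ translate under $s$ to
\begin{align*}
    s(\tau_h)(X,Y) = s(\tau)(\mathcal{H}X,Y), \qquad s(\tau_v)(X,Y) = s(\tau)(\mathcal{V}X,Y).
\end{align*}
Plugging these into the boxed formula for $s(\tau)$ gives
\begin{align*}
    s(\tau_h)(X,Y) &= \mathcal{H}\nabla^{M}_{\mathcal{H}X}\mathcal{V}Y + \mathcal{V}\nabla^{M}_{\mathcal{H}X}\mathcal{H}Y = A_{X}Y, \\
    s(\tau_v)(X,Y) &= \mathcal{H}\nabla^{M}_{\mathcal{V}X}\mathcal{V}Y + \mathcal{V}\nabla^{M}_{\mathcal{V}X}\mathcal{H}Y = T_{X}Y,
\end{align*}
which is the claimed correspondence. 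The main obstacle I expect is the first step: justifying rigorously that the projected connection $\psi$ corresponds exactly to the block-diagonal covariant derivative. This amounts to checking that $\psi$, when restricted further to either $\Fr{}{B}$-type or fiber-type subbundles, recovers the Levi-Civita connections on the base and on the fibers respectively; this is the same kind of reasoning already used for $\tilde\varphi^{\R}$ and $\varphi^{\mu^{-1}(0)}$ in the previous subsections, so it should go through in the same general setting once the structure equation \eqref{oneillstruc} is split into horizontal and vertical $\R^{b}/\R^{m-b}$-valued pieces.
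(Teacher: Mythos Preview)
Your proposal is correct and follows essentially the same route as the paper. The paper identifies the covariant derivative of the projected connection as $\hat{\nabla}_{\chi}Z = \mathcal{H}\nabla^M_{\chi}\mathcal{H}Z + \mathcal{V}\nabla^M_{\chi}\mathcal{V}Z$, computes $\nabla^M - \hat{\nabla} = A + T$, and then separates by horizontal versus vertical first argument exactly as you do; the only cosmetic difference is that the paper handles your ``main obstacle'' by first checking $\tilde{\nabla}$ on horizontal--horizontal and vertical--vertical pairs via \eqref{oneillcon} and then verifying that the formula above is the unique extension of $\tilde{\nabla}$ back to $\Fr{}{M}$, rather than arguing directly that the block-diagonal projection kills the mixing terms.
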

\begin{proof}
    Note that $\tau$ is described by the difference of the connection on
    $\Fr{}{M}$ and the connection on $\Fr{}{B,M}$. The connection on $\Fr{}{M}$ gives rise to
    the covariant derivative $\nabla^M$, and the connection on $\Fr{}{B,M}$ to $\tilde{\nabla}$. As we have shown before, the connection extended from $\tilde{\nabla}$
    splits into two connections which are the Levi Civita connection on the fibers and the horizontal submanifolds, if they exist. Even if they
    do not, a quick inspection of equation (\ref{oneillcon}), using the matrix form of the reduced connection, shows that
    \begin{align}
        \tilde{\nabla}_{\xi}X = \mathcal{H}\nabla^M_{\xi}X
    \end{align}
    if $\xi$ and $X$ are horizontal and
    \begin{align}
        \tilde{\nabla}_{\eta}Y = \mathcal{V}\nabla^M_{\eta}Y,
    \end{align}
    if $\eta$ and $Y$ are vertical. The unique extension of this to $\Fr{}{M}$ gives the connection
    \begin{align}
        \hat{\nabla}_{\chi} Z := \mathcal{H}\nabla^M_{\chi}\mathcal{H} Z + \mathcal{V}\nabla^M_{\chi}\mathcal{V} Z,
    \end{align}
    for $\chi$ an arbitrary tangent vector and $Z$ an arbitrary vector field on $M$.
    This can be verified by showing that the above is indeed a covariant derivative on $M$ and that it restricts to $\tilde{\nabla}$ if both $\chi$ and $Z$
    are vertical, or both are horizontal. The latter is immediately clear, the former some simple calculations.

    We see now, that
    \begin{align}
        \nabla^M_{\chi}Z &= \mathcal{H}\nabla^M_{\mathcal{H}\chi}\mathcal{H}Z + \mathcal{H}\nabla^M_{\mathcal{V}\chi}\mathcal{H}Z + \mathcal{H}\nabla^M_{\mathcal{H}\chi}\mathcal{V}Z
        + \mathcal{H}\nabla^M_{\mathcal{V}\chi}\mathcal{V}Z \\
        &\qquad + \mathcal{V}\nabla^M_{\mathcal{H}\chi}\mathcal{H}Z + \mathcal{V}\nabla^M_{\mathcal{V}\chi}\mathcal{H}Z + \mathcal{V}\nabla^M_{\mathcal{H}\chi}\mathcal{V}Z
        + \mathcal{V}\nabla^M_{\mathcal{V}\chi}\mathcal{V}Z \nonumber \\
        &= A_{\chi}Z + T_{\chi}Z + \hat{\nabla}_{\chi}Z, \nonumber
    \end{align}
    hence the difference of connections indeed gives $A + T$. Finally, notice that if $\chi$ is horizontal then $T$ vanishes, as does $\tau_v$. If on the other hand
    $\chi$ is vertical, then $A$ vanishes, as does $\tau_h$.
\end{proof}

The principal bundle of frames $\Fr{}{B}$ of $B$ can be pulled back to $M$ via
$\pi$. The Levi Civita connection $\phi^B$ on $\Fr{}{B}$ can also be pulled back to a connection
$\tilde{\phi}$ on $\pi^*\Fr{}{B}$ together with the structure equation
\begin{align}
    \tilde{\phi} + \tilde{\theta}_B \wedge\tilde{\phi} = 0,
\end{align}
where $\tilde{\theta}_B$ is the pull back of the solder form $\theta_B$ on
$\Fr{}{B}$. If we pull this solder form into $\Fr{}{B,M}$, we get a form $\theta_B^{\prime}$, where
the obvious restriction map is used $k\colon \Fr{}{B,M} \to \pi^*\Fr{}{B}$. A
calculation similar to that in remark (\ref{naturalsoldering}) shows that $\theta_B^{\prime}$ agrees with the part
of $\theta^{\prime}$, that has values in $\R^{b}$. If we split $\theta^{\prime}$ into two parts, $\theta_1$ and $\theta_2$ with values in $\R^{b}$
and $\R^{m - b}$, and $\psi$ into $\psi_1$ and $\psi_2$ with values in
$\mathfrak{so}(b)$ and $\mathfrak{so}(m-b)$,
then the structural equation (\ref{oneillstruc}) of $\psi$ decomposes into
\begin{align}
    d\theta_1 + \psi_1\wedge\theta_1 + \tau\wedge \theta_2 &= 0 \\
    d\theta_2 + \psi_2\wedge\theta_2 + \tau\wedge\theta_1 &= 0.
\end{align}
If we restrict the first equation to $\pi$-horizontal vectors, the last term vanishes and we see that $\psi_1$ is the Levi Civita connection pulled back from $B$.
Such a restriction also turns $\tau$ into $\tau_h$ and we get the formula
\begin{align}
    k^*\pi^*\phi^B + \tau_h = i^*\phi_M,
\end{align}
on $\Fr{}{B,M}$, if we restrict to vectors lifted from $B$. This is the recovery of
O'Neill's formula for the connections~\cite[Lemma 3.4]{oneill1966}.

\subsection{Forms on \texorpdfstring{$\Fr{SO}{N,\mu^{-1}(0)}$}{Fr(SO)(N,u(-1)(0))}}
Applying the last section to the reduction $\Fr{SO}{N,\mu^{-1}(0)}$ of $\Fr{SO}{\mu^{-1}(0)}$ on $\mu^{-1}(0)$, we get the equation
\begin{align} \label{redequtwo}
    j_3^{\R *} \varphi^{\mu^{-1}(0)} = \psi_1 + \psi_2 + \tau^{\prime},
\end{align}
where $\psi_1$ is the pull back of the Levi Civita connection on $N$.

\subsection{Forms on \texorpdfstring{$\Fr{Sp}{N,M}$}{Fr(Sp(N,M))}}
Now we will do a similar construction on the quaternionic side of the reduction for $\Fr{Sp}{N,M}$
As with $\Fr{SO}{\mu^{-1}(0), M}$, $\Fr{Sp}{N,M}$ will in general not be horizontal in $\iota^* \Fr{Sp}{M}$. Using Proposition~\ref{reduction},
we construct a connection $\tilde{\varphi}^{\Qu}$ with the decomposition
\begin{align} \label{qusplit}
    \mathfrak{sp}(m) = \mathfrak{sp}(n) \oplus \mathfrak{o}(k) \oplus \mathfrak{f},
\end{align}
induced by an inclusion of $\Sp(n)\times \SO(k)$ in $\Sp(m)$ as described in the beginning. As before, the obvious choice of
complement will satisfy the necessary condition (\ref{reductionCond}).

We get the projected connection form $\tilde{\varphi}^{\Qu}$ which decomposes into two equivariant one-forms $\phi_1^{\Qu}$ and $\phi_2^{\Qu}$ with
values in $\mathfrak{sp}(n)$ and $\mathfrak{so}(k)$ respectively and a difference form $\tau^{\Qu}$ with
\begin{align} \label{quickqu}
    \phi_1^{\Qu} + \phi_2^{\Qu} + \tau^{\Qu} = j_2^{\Qu *} \hat{\varphi}^{\Qu}.
\end{align}

\section{Final Result}\label{sec-final}

\subsection{Preparation}
Let us recall the connections of the real reductions. On $\Fr{SO}{\mu^{-1}(0), M}$ we have equation (\ref{redequone})
\begin{align} \label{quickref}
    \phi_1^{\R} + \phi_2^{\R} + \tau^{\R} = j_2^{\R *} \hat{\varphi}^{\R},
\end{align}
where $\hat{\varphi}^{\R}$ is the pull back of the Levi Civita connection on $M$. $\phi_1^{\R}$ is the pull back of the Levi-Civita connection of $\mu^{-1}(0)$,
which in turn decomposes on $\Fr{SO}{N,\mu^{-1}(0)}$ according to equation (\ref{redequtwo}).

The connection $\psi_1 + \psi_2$ on $\Fr{SO}{N,\mu^{-1}(0)}$ can be extended back to a connection $\tilde{\psi}_1 + \tilde{\psi}_2$ on $\Fr{SO}{\mu^{-1}(0)}$, so that we have
\begin{align}
    \tilde{\psi}_1 + \tilde{\psi}_2 + \tilde{\tau}^{\prime} = \varphi^{\mu^{-1}(0)},
\end{align}
where $\tilde{\tau}^{\prime}$ is defined by this equation (and hence the pull
back of it is $\tau^{\prime}$.) So if we pull back this equation to
$\Fr{SO}{\mu^{-1}(0), M}$, we get
\begin{align}
    k_1^{\R *} \tilde{\psi}_1 + k_1^{\R *} \tilde{\psi}_2 + k_1^{\R *} \tilde{\tau}^{\prime} = \phi_1^{\R},
\end{align}
and combining this with (\ref{quickref})

\begin{align}\label{realfinaleq}
    k_1^{\R *} \tilde{\psi}_1 + k_1^{\R *} \tilde{\psi}_2 + k_1^{\R *} \tilde{\tau}^{\prime} + \phi_2^{\R} + \tau^{\R} = j_2^{\R *} \hat{\varphi}^{\R}.
\end{align}

Since $i_3^{*}\hat{\varphi}^{\R} = \hat{\varphi}^{\Qu}$, we can identify the
right hand side of the equation above and of (\ref{quickqu}) if we pull back by
$i_3$,

\begin{align}\label{finalequ}
    i_3^*\left(k_1^{\R *} \tilde{\psi}_1 + k_1^{\R *} \tilde{\psi}_2 + k_1^{\R *} \tilde{\tau}^{\prime} + \phi_2^{\R} + \tau^{\R}\right) = \phi_1^{\Qu} + \phi_2^{\Qu} + \tau^{\Qu}.
\end{align}

To understand which terms correspond, it is a good idea to visualize where the
different forms take their values. If we identify $\Qu^n$ with $\R^{4n}$ such
that $a + ib + jc + kd$ gets mapped to $(a, b, c, d)$ ($a,b,c,d \in \R^n$), we
identify $n\times n$ quaternionic matrices $A + i B + j C + kD$ with $4n\times
4n$ real matrices of the form

\begin{align}\label{prototype}
    \left(\begin{array}{cccc} A & - B & -C & -D \\ B & A & -D & C \\ C & D & A & -B \\ D & -C & B & A\end{array}\right).
\end{align}

If we use a frame $p\in \Fr{Sp}{N,M}$ to identify $\iota^*(TM)$ with $\R^{4m}$, we see that both sides of the equations take values in matrices of the form
\begin{align} \label{matequ}
    \left(\begin{array}{cc} M_1 & -M_2^t \\ M_2 & M_3 \end{array}\right),
\end{align}
where $M_1$ is a $4n\times 4n$, $M_2$ a $4k\times 4n$ and $M_3$ a $4k\times 4k$
block matrix of the type given above. Using the quaternionic splitting, we can
decompose the $M_i$ into $A_i, B_i, C_i$ and $D_i$. Note that in $M_3$ only
$A_3$ (the diagonal) is non vanishing, because of the inclusion $SO(k)
\hookrightarrow Sp(k)$, $A \mapsto A + iA + j A + k A$.

The components of the matrices $M_i$ are of course only defined up to the choice
of frame $p\in \Fr{Sp}{N,M}$. However, two different frames differ by a matrix
in $Sp(n)\times SO(k)$, which leaves the components of $M_3$ and the
component-rows of $M_2$ invariant. $M_1$ and the columns of $M_2$ get
transformed by conjugation with a $\Qu$-linear matrix.

Define the matrix $M_2^1$ to be the first $k$ rows of $M_2$, $M_2^2$ to be
the other $3k$ rows and $M_3^1$ as $A_3$, ${(M_3^2)}^t = (B_3, C_3, D_3)$ and
$M_3^3$ as the matrix $M_3$ without the first $k$ columns and first $k$ rows.
Hence we may write (\ref{matequ}) as
\begin{align*}
    \left(\begin{array}{ccc}\begin{array}{ccc} & &  \\ & M_1 & \\ & & \end{array} & -{(M_2^1)}^t & -{(M_2^2)}^t \\ M_2^1 & M_3^1 & -{(M_3^2)}^t \\
            M_2^2 & M_3^2 & M_3^3 \end{array}\right).
\end{align*}

Starting with the right hand side of the equation (\ref{finalequ}),
$\phi_1^{\Qu}$ takes values $M_1$, $\phi_2^{\Qu}$ in $M_3$ and $\tau^{\Qu}$ the
remaining $M_2$ matrix. On the left hand side, $\tilde{\psi}_1$ takes values in
the $M_1$, $\tilde{\psi}_2$ in $M_3^1$, $\tilde{\tau}^{\prime}$ in $M_2^1$,
$\phi_2^{\R}$ in $M_3^3$ and $\tau^{\R}$ in the remaining $M_2^2$ and $M_3^2$
matrices.

\subsection{The Results}

The equations (\ref{finalequ}) and the following analysis of the previous section allows us to recover some of the results from~\cite{gocho1992}. First we see that
\begin{align} \label{finalone}
    i_3^* k_1^{\R *} \tilde{\psi}_1 = \phi_1^{\Qu} \qquad \Rightarrow \qquad k^*\psi_1 = \phi_1^{\Qu},
\end{align}
because both sides take values in $M_1$. If we pull back the Levi-Civita connection on $\Fr{SO}{N}$ to $\Fr{Sp}{N,M}$ via $\Fr{Sp}{N}$, we get $\phi_1^{\Qu}$ because of this
equation. Hence the pull back to $\Fr{Sp}{N}$ takes values in $\Qu$-linear
matrices, in other words the connection reduces to one on $\Fr{Sp}{N}$. This
shows that $N$ is indeed a hyperkähler manifold.

A more constructive argument can be given by noting that the Levi-Civita connection on $M$ is $G$-invariant, for the canonical choice of extension of the $G$ action
to $\Fr{SO}{M}$. This remains true for $\phi_1^{\Qu}$ and a careful examination shows that it can be pushed down to $\Fr{Sp}{N}$.

If we continue with $M_3$, we see that for $\xi \in \mathfrak{g}$, $\SF(\cdot, \xi)$, which is described by $M_3^2 = 0$, vanishes.

The fact that $M_3$ is only non-vanishing on the diagonal, gives a connection
between the covariant derivative on the fibers of $\pi \colon \mu^{-1}(0) \to
N$, and the normal derivative of $\mu^{-1}(0)$ described by $\phi_2^{\R}$, i.e.
$D_{\xi} Y := \text{pr}_{T\mu^{-1}{(0)}^{\perp}}\nabla^M_{\xi} Y$, for $\xi \in
T\mu^{-1}(0)$ and $Y\in \Gamma(\mu^{-1}(0), T\mu^{-1}{(0)}^{\perp})$ (see
e.g.~\cite[VII]{nomizu}). Precisely, we have for all $A\in\left\{I,J, K\right\}$
\begin{align}
    \nabla^{\text{F}}_{\xi} X = d\mu^A \circ D_{\xi}(AK^{\eta}), \qquad \forall \xi,\eta \in \mathfrak{g},
\end{align}
where $\nabla^{\text{F}}$ is the connection on the fiber.

Let us now focus on $M_2$. From proposition $(\ref{secfunprop})$ we know that $M_2^2$ and $M_3^2$ give the second fundamental form and from proposition
$(\ref{oneillprop})$ we know that $M_2^1$ is $A + T$, the O'Neill tensors. Hence
\begin{align}
    M_2(\xi) = p^{-1}\circ \left(\begin{array}{cccc} (A_{\xi}+T_{\xi})(\cdot) & (A_{\xi}+T_{\xi})(I\cdot) &  (A_{\xi}+T_{\xi})(J\cdot) & (A_{\xi}+T_{\xi})(K\cdot) \\
            \SF^{I}(\xi, \cdot ) & \SF^{I}(\xi, I \cdot )  & \SF^{I}(\xi, J\cdot ) & \SF^{I}(\xi, K\cdot ) \\
            \SF^{J}(\xi, \cdot ) & \SF^{J}(\xi, I \cdot )  & \SF^{J}(\xi, J\cdot ) & \SF^{J}(\xi, K\cdot ) \\
            \SF^{K}(\xi, \cdot ) & \SF^{K}(\xi, I \cdot )  & \SF^{K}(\xi, J\cdot ) & \SF^{K}(\xi, K\cdot )
        \end{array}\right)\circ p,
\end{align}
where $\SF^{A}$ is the second fundamental form of $\mu^{-1}(0) \hookrightarrow
M$ projected onto $A\mathfrak{g} \subset T\mu^{-1}{(0)}^{\perp}$ and $p\in \Fr{Sp}{N,M}$ is a frame (restricted in a suitable way).
Using the form (\ref{prototype}) of the matrix, we get the following results (recall the notation $\iota^*(TM) = H \oplus \mathfrak{g}\otimes_{\R}\Qu$).

If $\xi \in H$ and $\cdot \in H$, then the first row of $M_2$ becomes $-\frac{1}{2}R(\xi, \cdot), \ldots$,
where $R$ is the curvature of $\mu^{-1}(0) \to N$ as discussed before. This yields that for all $\xi,\eta \in H$,
\begin{align}
    -\frac{1}{2}R(\xi,\eta) = \SF^I(\xi, I\eta) = \SF^J(\xi, J\eta) = \SF^K(\xi, K\eta).
\end{align}
Here $\SF^I = d\mu^{I}\circ \SF$.
Note that this in particular implies that $R$ is hyperholomorphic, i.e.\ of type $(1,1)$ with respect to all complex structures (on $N$, viewing $R$ as a two form on $N$).

If $\xi \in \mathfrak{g}$ and $\cdot \in H$, then the first row becomes $T_{\xi}\cdot = \mathcal{V} \nabla_{\xi}^{\mu^{-1}(0)}\cdot, \ldots$, where $\nabla^{\mu^{-1}(0)}$
is the Levi-Civita connection on $\mu^{-1}(0)$ and $\mathcal{V}$ is the vertical projection in $T\mu^{-1}(0)$ from $\pi\colon \mu^{-1}(0) \to N$. This can be described
as the negative of the Weingarten map $\mathcal{W}_\xi(\cdot)$ of the fibers of $\pi$. Hence we get for all $\xi\in\mathfrak{g}, \eta \in H$,
\begin{align*}
    -\mathcal{W}_\xi(\eta) = \SF^I(\xi, I\eta) = \SF^J(\xi, J\eta) = \SF^K(\xi, K\eta).
\end{align*}
However, since $\SF$ is symmetric, $\SF(\xi,\cdot) = 0$, hence the Weingarten map
of the fibers vanish, in other words, the fibers are totally geodesic.

If $\xi \in H$ and $\cdot \in \mathfrak{g}$, the discussion needs to be carried out in $-M_2^t$. Using the formula for $A$ and $T$ (and that $\SF(\xi,\cdot) = 0$),
we see that
\begin{align}
    \text{pr}_{H}\circ \nabla^{\mu^{-1}(0)}_{\xi} X = 0,
\end{align}
for all $\xi \in H$ and $X\in \Gamma(\mu^{-1}(0),\mathfrak{g})$, which is already clear from $\mathcal{W}_{X}(\xi) = 0$. Both $\xi$ and $\cdot$ in $\mathfrak{g}$
again yield that the second fundamental forms of the fibers of $\pi$ vanish.

\bibliographystyle{plain}
\bibliography{reduction}
\vspace{1cm}
{\setlength{\parindent}{0cm}
\textsc{
Robin Raymond\\
Mathematisches Intitut\\
Georg-August-Universität Göttingen\\
Bunsenstraße 3-5\\
37073 Göttingen, Germany\\
}
\textsc{Email: }\href{mailto:robin.raymond@mathematik.uni-goettingen.de}{\url{robin.raymond@mathematik.uni-goettingen.de}}
}

\end{document}